\def\cvd{~\vbox{\hrule\hbox{%
     \vrule height1.3ex\hskip0.8ex\vrule}\hrule } }
\newtheorem{remark}[theorem]{Remark}
\newcommand{\reals}{\mathbb{R}}
\newcommand{\complex}{\mathbb{C}}
\author{
Roderick Edwards\thanks{Department of Mathematics and Statistics, University of Victoria, Victoria BC V8W 3R4, Canada (edwards@uvic.ca).}
\and
Eric Foxall\thanks{Department of Mathematics and Statistics, University of Victoria, Victoria BC V8W 3R4, Canada (e.t.foxall@gmail.com).}
\and 
Theodore J. Perkins\thanks{Ottawa Hospital Research Institute, Ottawa ON K1H 8L6, Canada
(tperkins@ohri.ca), and Department of Biochemistry, Microbiology and
Immunology, University of Ottawa, Ottawa ON K1H 8M5, Canada.}}
\DeclareMathOperator{\itin}{It}
\DeclareMathOperator{\vl}{Vl}
\DeclareMathOperator{\vt}{Vt}
\DeclareMathOperator{\spn}{span}
\title{Scaling Properties of Paths on Graphs}
\begin{document}
\maketitle

\section*{Abstract}
Let $G$ be a directed graph on finitely many vertices and edges, and assign a positive weight to each edge on $G$.  Fix vertices $u$ and $v$ and consider the set of paths that start at $u$ and end at $v$, self-intersecting in any number of places along the way.  For each path, sum the weights of its edges, and then list the path weights in increasing order.  The asymptotic behaviour of this sequence is described, in terms of the structure and type of strongly connected components on the graph.  As a special case, for a Markov chain the asymptotic probability of paths obeys either a power law scaling or a weaker type of scaling, depending on the structure of the transition matrix.  This generalizes previous work by Mandelbrot and others, who established asymptotic power law scaling for special classes of Markov chains.

\begin{keywords}
Non-negative matrices, Perron-Frobenius theory, Directed graphs, Markov chains, Power law scaling.
\end{keywords}
\begin{AMS}
15B48, 60J10. 
\end{AMS}

\section{Introduction}
Many sequential processes can be described as walks on directed graphs.  Consider examples such as one's morning drive to work, or navigating the world-wide web, or stochastic conformational changes in a protein molecule, or fluctuations in the value of a stock on the stock market.  
In each case there is a natural notion of ``state" to the system, which can be viewed abstractly as a vertex in a graph: one can be at a particular intersection in the city, one can be viewing a particular page on the world-wide web, the protein molecule can be in a particular conformation, and the stock has a current price.\\

Moreover, there are transitions between states that can be can viewed as edges on the graph: roads allow us to travel between intersections, hyperlinks allow navigation between web pages, thermal fluctuations cause a molecule to switch from one conformation to another, and buying or selling pressure can change the price of a stock.  In general, these links may be unidirectional.  For instance, some roads allow travel in only one direction.  On the world wide web, one web page may link to a second page, but the second page may have no link back to the first.\\

Now, suppose we attach a positive weight to each edge in the graph.  In a road network example, where each edge corresponds to a stretch of road, we might associate to each edge the length of the corresponding road, or the amount of time it takes to travel that  road.  Then, the total distance travelled or time taken in travelling any particular route from home to work is equal to the sum of weights of the corresponding edges.  In the stochastic molecule scenario, associate to each edge the negative log probability of the corresponding change occurring, which is a positive number if the probability of change is less than $1$.  Then, the negative log probability of any sequence of conformational changes is again given by the sum of weights of the corresponding edges.\\

In general, there may be many paths between two vertices in a directed graph.  Indeed, if one allows paths to visit the same vertex more than once, then there are in general infinitely many possible paths, even if the graph itself is finite.\\


Among all the possible paths between two vertices on a weighted directed graph, one will have minimum total weight - corresponding to the shortest or fastest route to work, or the most probable sequence of steps from one molecular state to another.  Another path will have the second smallest total weight, another will have the third smallest, and so on.  This begs the question: How does this sequence of weights behave asymptotically? 
More formally, if we let $p_r$ be the weight of the path with $r^{th}$ smallest total weight, how does $p_r$ scale with $r$?  This is the question answered in this paper.\\

We show that the order of this relationship depends only on the structure and type of strongly connected components in the graph, while the exact rate of scaling depends on the edge weights as well.  We also show how to compute the scaling relationship for any given instance using standard graph-theoretic algorithms and eigenvalue computations.

\section{Main Result}\label{secmain}

The main result of the paper is Theorem \ref{main}.  First, we establish some language for describing paths and path weights on a directed graph.

\subsection{Paths}\label{secpaths}
Let $G = (V,E,I,O,W)$ denote an edge-weighted directed multigraph (i.e., a graph in which multiple edges may emanate from a vertex), where $V$ and $E$ are finite sets and $I:E\rightarrow V$, $O:E\rightarrow V$ and $W:E\rightarrow \reals^+$ are functions.  The set $V$ is called the vertex set, and $E$ is the edge set; if $I(e) = u$ and $O(e)=v$ then $e$ is an edge from $u$ to $v$; $W(e)$ denotes the weight of the edge.  For $u,v \in V$, $E(u,v)$ denotes the set of edges from $u$ to $v$.  Note that each subset $U\subset V$ \emph{induces} a graph defined by restricting to the vertex set $U$ and to the edges that satisfy $I(e)\in U$, $O(e)\in U$.  The in-degree of a vertex $v$ is the cardinality of $\{e \in E: O(e)=v\}$, and the out-degree of $v$ is the cardinality of $\{e \in E: I(e)=v\}$.\\

A path on $G$ is a non-empty list of edges $x = x_1x_2...x_k$, $x_i \in E$ for $1 \leq i \leq k$, such that $I(x_{i+1}) = O(x_i)$, $1 \leq i < k$.  Say that $x$ is a path from $u$ to $v$ and write $I(x)=u$, $O(x)=v$ if $I(x_1)=u$ and $O(x_k)=v$.  For $u,v \in V$ say that $u\rightarrow v$ if there is a path from $u$ to $v$, and say that $u\leftrightarrow v$ if $u \rightarrow v$ and $v\rightarrow u$.  Let $[u] = \{v \in V: u\ \leftrightarrow v\}$.  Since $\leftrightarrow$ is symmetric and transitive, it partitions $\{v \in V: [v]\neq\emptyset\}$ into classes, which are called the strongly connected components of the graph.  A graph is said to be strongly connected if $u\leftrightarrow v$ for each pair $u,v$ of vertices on the graph.  See Figure \ref{figsccs} for an example.  A \emph{cycle} is a strongly connected graph in which every vertex has in-degree and out-degree equal to 1.\\

\begin{figure}
\centering
\mbox{\subfigure[A graph having no strongly connected components]{\includegraphics[height=60mm,width=75mm]{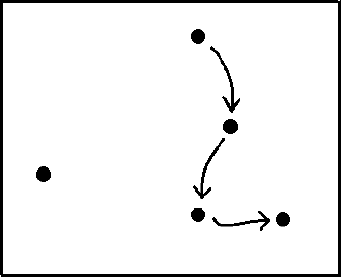}}\quad
\subfigure[A graph having two strongly connected components:  the singleton on the left, and the two vertices on the lower right]{\includegraphics[height=60mm,width=75mm]{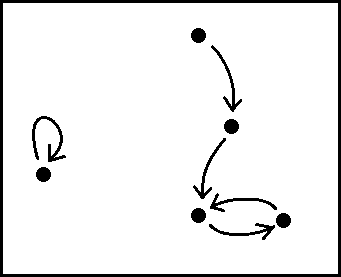} }}
\caption{Two examples of graphs and their strongly connected components.}
\label{figsccs}
\end{figure}

Let $\vt(x) = \{v \in V: v=I(x_i)\,\,\textrm{or}\,\,v=O(x_i)\,\,\textrm{for some}\,\,i\}$ denote the set of vertices met by a path $x$, and let $l(x)$, the \emph{length} of a path, denote the number of edges on that path; for example, if $x=x_1...x_k$ then $l(x)=k$.  For a set of vertices $U \subset V$, say that $x$ is a path on $U$ if $\vt(x) \subset U$.  Let $W(x) = \sum_i W(x_i)$ denote the weight of a path.  If $x = x_1...x_j$ is a path from $v_1$ to $v_2$ and $y = y_1...y_k$ is a path from $v_2$ to $v_3$ then $xy = x_1...x_jy_1...y_k$ is a path from $v_1$ to $v_3$ and $W(xy) = W(x)+W(y)$.  For any set of paths $X$, let $\vt(X) = \{\vt(x):x \in X\}$, then every path in $X$ is a path on $\vt(X)$.\\

Let $W(X) = \bigsqcup_{x \in X}W(x)$; $W(X)$ is called the \emph{set of weights} for $X$.  The \emph{sequence of weights} (s.o.w.) $(p_r)$ for a set of paths $X$, or more accurately for the set of weights $W(X)$, is an enumeration of the elements of $W(X)$ in ascending order.  The subscript $_r$ in $(p_r)$ is called the \emph{rank} of a path.\\

Although denoted $(p_r)$, the sequence of weights is a sequence of positive numbers and not probabilities (the lower-case $w$ is reserved for vectors).  However, a Markov chain can easily be converted to a graph of the above type by collapsing pairs of nodes linked by edges of probability 1, and then taking negative $\log$ of the probabilities.  Moreover, the weight of a path is then equal to negative $\log$ of its probability, since $\log$ takes products to sums.\\

The following is the main result of this paper.  Sections \ref{secitin} and \ref{secdecomp} should suffice to explain how the result is obtained from the Lemmata and Theorems mentioned in the statement of the result.\\

\begin{theorem}\label{main} Let $G=(V,E,I,O,W)$ denote a directed weighted graph.  For $v_1,v_2 \in V$, let $X$ denote the set of paths from $v_1$ to $v_2$ on $G$, and let $(p_r)$ denote the corresponding sequence of weights.  Suppose $X$ is non-empty.
\begin{remunerate}
\item If there are no strongly connected components (s.c.c's) on $\vt(X)$ then $X$ is a finite set.
\item If every s.c.c. on $\vt(X)$ is a cycle, then let $c$ be the greatest number of components met by a path, and
\begin{equation*}
\lim_{r\to\infty}p_r^c/r = s
\end{equation*}
where the value of $s$ is computed from the structure of s.c.c.'s on $\vt(X)$, using Lemma \ref{cycle}, Lemma \ref{comp} and Lemma \ref{union}.
\item If there is at least one s.c.c. on $\vt(X)$ which is not a cycle then
\begin{equation*}
\lim_{r\to\infty}p_r/\log r = s
\end{equation*}
where $s$ is the smallest value assigned to a s.c.c. on $\vt(X)$ by Theorem \ref{irredGraph}.
\end{remunerate}
\end{theorem}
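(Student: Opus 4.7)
The plan is to prove the three cases by first decomposing every path in $X$ according to its \emph{itinerary} through the strongly connected components of $\vt(X)$. Following the outline hinted at in Sections \ref{secitin} and \ref{secdecomp}, any $x \in X$ can be uniquely expressed as an alternating concatenation of (i) bridging edges that leave one s.c.c.\ and enter the next in the condensation of $\vt(X)$, and (ii) subpaths lying entirely within a single s.c.c. Because there are only finitely many s.c.c.'s and only finitely many bridging edges, the set of possible itineraries is finite, and $W(X)$ splits as a disjoint union of weight sets, one per itinerary. This reduces the asymptotic question to an itinerary-by-itinerary analysis, combined through Lemma \ref{union}.

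For part 1, if $\vt(X)$ has no s.c.c.\ then no vertex of $\vt(X)$ can be revisited by any path in $X$, since such a revisit would produce a closed subwalk and hence a nontrivial $\leftrightarrow$-class on $\vt(X)$. Simple paths in a finite graph are themselves finitely many, so $X$ is finite.

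For part 2, within each cycle s.c.c.\ $C$ the only freedom in visiting $C$ once is the number of extra laps taken before leaving, contributing a nonnegative integer multiple of the cycle's total weight $w_C$, as made precise by Lemma \ref{cycle}. Hence, for a fixed itinerary visiting $k$ cycles $C_1,\dots,C_k$, the associated weights form a shifted affine lattice $w_0 + n_1 w_{C_1} + \dots + n_k w_{C_k}$ with the $n_i$ ranging over the nonnegative integers. A standard lattice-point count shows that the number of such weights at most $W$ grows like $\alpha W^k$, which is precisely what Lemma \ref{comp} packages. Lemma \ref{union} then combines itineraries: those visiting fewer than $c$ cycles contribute lower-order terms, so only the itineraries attaining $k=c$ matter, and inverting the resulting polynomial count of degree $c$ yields $p_r^c/r \to s$.

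For part 3, at least one s.c.c.\ on $\vt(X)$ is strictly larger than a cycle, so within any such component $C$ the number of internal subpaths of weight at most $W$ grows exponentially in $W$, with rate encoded in the value $s_C$ assigned by Theorem \ref{irredGraph}. For any itinerary that passes through a non-cycle s.c.c., the corresponding count therefore grows exponentially, swamping the polynomial contributions of cycle-only itineraries, and under Lemma \ref{union} the overall asymptotic is dictated by the itinerary whose $s_C$ is smallest; setting $s = \min s_C$ and inverting the relation $\log N(W) \sim W/s$ yields $p_r/\log r \to s$. The principal obstacle, in both parts 2 and 3, is controlling the contribution of sub-dominant itineraries and of the itinerary-dependent bridging weights, so that they do not perturb the leading asymptotic; this is exactly the task Lemma \ref{union} is designed for, and the remainder of the argument amounts to careful bookkeeping.
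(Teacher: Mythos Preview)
Your proposal is correct and follows essentially the same route as the paper: decompose $X$ by itinerary (Lemma~\ref{itin} and equations \eqref{Wdecomp1}--\eqref{Wdecomp2}), handle each s.c.c.\ via Lemma~\ref{cycle} or Theorem~\ref{irredGraph}, and then assemble via Lemmas~\ref{comp} and~\ref{union}; your Part~1 argument (no revisits, hence only simple paths) is a slightly more direct variant of the paper's corollary, but equivalent. One small omission in Part~3: you cite only Lemma~\ref{union} to combine itineraries, but within a single itinerary that traverses several s.c.c.'s you must first apply Lemma~\ref{comp} part~2 to conclude that the composed sequence satisfies $p_r/\log r \to \min_i s_{C_i}$, and only then take the union across itineraries via Lemma~\ref{union}; once that step is made explicit the argument is complete.
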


\begin{remark}The asymptotic behaviour of more general classes of paths can be computed using the above result.  For instance, the set of paths from a fixed vertex to an arbitrary vertex is the disjoint union of such sets, and the set of paths from a fixed vertex, passing through a second fixed vertex, to a third fixed vertex, is a direct sum of sets of this type.  Moreover, the rules for computing the asymptotics of these sets are given by Lemma \ref{comp} and Lemma \ref{union}.
\end{remark}

\subsection{The Itinerary}\label{secitin}
The classification in \ref{main} is enabled by a function called the itinerary, defined below.  The itinerary of a path is a partial description of the path; it gives the start vertex of the path, the end vertex of the path, and for each s.c.c. met by the path, it gives the entry and exit vertices to the s.c.c.  For the following, define $\vl(x) = I(x_1)I(x_2)...I(x_k)O(x_k)$ that lists the vertices met by a path.

\begin{definition} \rm
For a path $x$ let $s_1...s_k$ denote $\vl(x)$.  For $1<i<k$, if $s_{i-1}\leftrightarrow s_i \leftrightarrow s_{i+1}$, substitute $s_{i-1}s_{i+1}$ for $s_{i-1}s_is_{i+1}$.  Since the substitution shortens the list, the process terminates in a list
\begin{equation*}
\itin(x) = s_1...s_m
\end{equation*}
which is called the \emph{itinerary} of $x$.
\end{definition}

If $s_1...s_m$ is an itinerary then $s_i \leftrightarrow s_j \Rightarrow |i-j| \leq 1$; in other words, only the entry and exit vertices to each strongly connected component met by a path appear in the itinerary.  This is because, since $s_i \rightarrow s_j$ for $i<j$ and $u \rightarrow u \Rightarrow u\leftrightarrow u$ for each $u \in V$, so that from the construction, $s_i \neq s_j$ for $j>i+1$.  A corollary of this construction is that a vertex appears at most twice in a given itinerary, and so the cardinality of the range of $\itin$ is bounded by $\leq (2|V|)!$, and in particular is finite.\\

The next lemma states that if $s$ is the itinerary of a path in $X(v_1,v_2)$, then $It^{-1}(s)$, the set of paths in $X(v_1,v_2)$ with itinerary $s$, is a direct product of paths on s.c.c.'s and transitions from one s.c.c. to the next.  For $U\subset V$ and $v_1,v_2 \in U$ let $X(v_1,v_2;U)$ denote $\{x \in X(v_1,v_2): \vt(x) \subset U\}$, the set of paths from $v_1$ to $v_2$ on $U$.  A list $s_1...s_m$ is an \emph{admissible} itinerary for a set $X$ if there exists $x \in X$ such that $\itin(x) = s_1...s_m$.\\

\begin{lemma}\label{itin}For $v_1,v_2 \in V$, let $s=s_1...s_m$ be an admissible itinerary for $X(v_1,v_2)$.  Then $\itin^{-1}(s)$ is the set of paths of the form $x^{(1)}...x^{(m-1)}$, where $x^{(i)} \in X(s_i,s_{i+1};[s_i])$ if $s_i\leftrightarrow s_{i+1}$ and $x^{(i)} \in E(s_i,s_{i+1})$ otherwise.
\end{lemma}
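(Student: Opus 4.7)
The plan is to describe concretely which positions of $\vl(x)$ survive the reduction defining $\itin$, and then use that description to establish both inclusions.

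First I would analyze the reduction. A vertex $v_j$ with $0<j<k$ in $\vl(x)=v_0\cdots v_k$ is removable exactly when $v_{j-1}\leftrightarrow v_j\leftrightarrow v_{j+1}$, which forces all three into a common s.c.c. $C$. Because a path cannot leave and re-enter an s.c.c. (any such return would collapse two distinct components into one), $\vl(x)$ splits uniquely into maximal blocks $v_a\cdots v_b$ of vertices lying in a common s.c.c., interspersed with transient vertices having $[v_j]=\emptyset$. An induction on block length shows that each block collapses exactly to $v_a v_b$ (or to $v_a$ when $a=b$), while transient vertices are always preserved. A local-confluence check (non-overlapping substitutions commute, and any two overlapping substitutions force four consecutive vertices to share an s.c.c., yielding the same result either way), combined with termination of the rewriting, shows that the retained positions $0=j_1<\cdots<j_m=k$ are an invariant of $x$ and satisfy $v_{j_l}=s_l$.

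For the forward direction I would take any $x=x^{(1)}\cdots x^{(m-1)}$ of the stated form and apply the reduction to $\vl(x)$. Internal vertices of an s.c.c.-step ($s_i\leftrightarrow s_{i+1}$) lie in $[s_i]$ and collapse, while transition steps contribute no internal vertex; so the reduction produces $s_1\cdots s_m$, which is already irreducible because $s$ is admissible, giving $\itin(x)=s$. For the reverse direction I would take $x\in\itin^{-1}(s)$, split $x$ at the retained positions into subpaths $x^{(i)}$ from $s_i$ to $s_{i+1}$, and check the type. If $s_i\leftrightarrow s_{i+1}$, the indices from $j_i$ to $j_{i+1}$ all lie in a single s.c.c. block, forcing $\vt(x^{(i)})\subseteq[s_i]$. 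If $s_i\not\leftrightarrow s_{i+1}$, no intermediate index can be retained (else it would appear in $s$), and no intermediate index can be removed either, since tracing the last such removal would force $v_{j_i}$ and $v_{j_{i+1}}$ to be its two list-neighbors and hence $s_i\leftrightarrow s_{i+1}$, a contradiction; so $x^{(i)}$ is a single edge of $E(s_i,s_{i+1})$.

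The main obstacle is making the block-collapse picture rigorous: one needs to know that the set of retained positions is an invariant of $x$, not an artifact of the order in which substitutions are performed. This is exactly what the local-confluence-plus-termination argument (Newman's lemma) provides. Once this invariant description is in hand, both inclusions follow by direct inspection.
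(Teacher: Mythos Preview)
Your argument is correct and follows the same route as the paper's: both identify the surviving positions of $\vl(x)$, split $x$ at those positions, and verify the two inclusions directly. The paper's proof is terser---it simply asserts the existence of the increasing map $\sigma$ ``from the definition of $\itin$''---while your confluence argument via Newman's lemma supplies the well-definedness of the retained positions that the paper leaves implicit.
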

\begin{proof}Let $x \in X(v_1,v_2)$ and let $u_1...u_k$ denote $\vl(x)$ and $s_1...s_m$ denote $\itin(x)$.  From the definition of $\itin$ there is a strictly increasing function $\sigma:\{1,...,m\} \rightarrow \{1,...,k\}$ such that $u_{\sigma(i)} = s_i$ for $1\leq i \leq m$, and such that $u_j \in [u_{\sigma(i)}]$ for $\sigma(i) \leq j \leq \sigma(i+1)$ if $s_i \leftrightarrow s_{i+1}$, and $\sigma(i)+1 = \sigma(i+1)$ otherwise.  Therefore $x$ has the form described above.  Conversely, each path of the form described above has itinerary $s_1...s_m$.  If $s_1...s_m$ is admissible for $X(v_1,v_2)$ then $s_1=v_1$ and $s_m=v_2$, so that each path of the form described above is a path from $v_1$ to $v_2$.
\end{proof}

\begin{remark}
If $v_1=v_2=v$ then $\itin(x)=vv$ for each $x \in X(v_1,v_2)$; to see this let $s_1...s_k=\vl(x)$.  Then $s_1 = s_k=v$ and $v=s_1 \rightarrow s_i \rightarrow s_k = v$ for $1<i<k$, so that $v \leftrightarrow s_i$ for $1 \leq i \leq k$, and by transitivity, $s_i\leftrightarrow s_j$ for $1\leq i,j \leq k$ and all but the endpoints are collapsed.
\end{remark}

At this point we can prove Part 1 of \ref{main}.
\begin{corollary}For a graph $G=(V,E,I,O,U)$ and $v_1,v_2 \in V$, let $X$ denote the set of paths from $v_1$ to $v_2$.  If there are no strongly connected components on $\vt(X)$, then $X$ is a finite set.
\end{corollary}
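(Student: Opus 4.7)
The plan is to reduce the claim to a finiteness statement about \emph{simple} paths. Specifically, I would argue that under the hypothesis every path $x \in X$ has a vertex list $\vl(x) = u_0 u_1 \cdots u_k$ with no repeated entries. Once that is established, the length satisfies $k \leq |V|-1$, so the number of admissible vertex lists is bounded by the number of injective sequences drawn from $V$, which is finite; and for each such list the number of realizing edge sequences is at most $\prod_{i=1}^{k} |E(u_{i-1},u_i)|$, again finite since $E$ is finite. Summing over all admissible vertex lists then yields $|X| < \infty$.

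The heart of the argument is the simplicity claim, which I would prove by contradiction. Suppose $x = x_1 \cdots x_k \in X$ satisfies $u_i = u_j$ for some $i < j$ in $\vl(x)$. Then the subpath $x_{i+1} \cdots x_j$ is a nonempty path from $u_i$ back to $u_i$, witnessing $u_i \rightarrow u_i$ and therefore $u_i \leftrightarrow u_i$. All vertices and edges along this cycle belong to $\vt(x) \subset \vt(X)$, so the relation $u_i \leftrightarrow u_i$ persists in the subgraph induced on $\vt(X)$; hence $u_i$ is contained in a nonempty strongly connected component on $\vt(X)$, contradicting the hypothesis.

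The only delicate point is verifying that the cycle witnessing strong connectivity of $u_i$ lies inside $\vt(X)$ rather than straying into the larger ambient graph, but this is immediate because the cycle is assembled from consecutive edges of the original path $x$, whose incident vertices are by definition elements of $\vt(X)$. The remainder of the proof is routine counting, so I do not anticipate any substantive obstacle beyond stating the simplicity argument carefully.
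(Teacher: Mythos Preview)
Your proof is correct. The paper takes a closely related but differently packaged route: it invokes the itinerary decomposition $X = \bigcup_{s \in \itin(X)} \itin^{-1}(s)$ from the preceding Lemma, observes that the no-s.c.c.\ hypothesis forces $s_i \nleftrightarrow s_{i+1}$ for every consecutive pair in an admissible itinerary, and hence (by that Lemma) each fibre $\itin^{-1}(s)$ is a finite product $\prod_i E(s_i,s_{i+1})$; finiteness of $\itin(X)$ then finishes. Your approach is more elementary and self-contained: rather than routing through the itinerary machinery, you argue directly that every path is simple, bound its length by $|V|-1$, and count. In the no-s.c.c.\ setting the itinerary \emph{is} the full vertex list (no collapsing ever occurs), so the two arguments are really the same pigeonhole idea in different dress; the paper's version has the advantage of exercising its central decomposition tool as a warm-up for later sections, while yours has the advantage of not depending on that infrastructure.
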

\begin{proof}Observe that $X = \bigcup_{s \in \itin(X)}\itin^{-1}(s)$.  Since there are no s.c.c., for each $u,v \in \vt(X)$, $u\nleftrightarrow v$.  Let $s=s_1...s_m$ be an admissible itinerary, then $s_i \in \vt(X)$, $1\leq i \leq m$ and so $s_i \nleftrightarrow s_j$, $1\leq i,j \leq m$.  Therefore, $\itin^{-1}(s)$ is the set of paths of the form $x^{(1)}...x^{(m-1)}$, where $x^{(i)} \in E(s_i,s_{i+1})$ for $1 \leq i <m$.  Since for each $u,v \in V$, $E(u,v)$ is a finite set, $\itin^{-1}(s)$ is finite for each $s \in \itin(X)$.  Since $\itin(X)$ is a finite set it follows that $X$ is a finite set.
\end{proof}

If there are strongly connected components on $\vt(X(v_1,v_2))$, then $X(v_1,v_2)$ is an infinite set, since it is possible to cycle around on an s.c.c. and obtain longer and longer paths.

\subsection{Decomposition of the sequence of weights}\label{secdecomp}

The following definitions are used to describe the forthcoming decomposition.
\begin{definition} \rm
Suppose for each $i\in\{1,\ldots,k\}$ that $(p_{r}^{(i)})$, $r=1,2,...$ is a non-decreasing positive sequence.  The \emph{composition} of the sequences $(p_{r}^{(i)})$, $i=1,...,k$, is the unique (up to permutation of equal entries) non-decreasing sequence containing the entries $\sum_{i=1}^k p_{j_i}^{(i)}$, where $(j_i)$ ranges over $\mathbb{N}^k$.
\end{definition}

If for $1 \leq i \leq k$, $X_i$ is a set of paths and $(p_r^{(i)})$ is the sequence of weights for $W(X_i)$, then the sequence of weights for $\bigoplus_{i=1}^k W(X_i)$ is the composition of the $(p_r^{(i)})$.

\begin{definition} \rm
Suppose for each $i\in\{1,\ldots,k\}$ that $(p_{r}^{(i)})$ is a non-decreasing positive sequence.  The \emph{union} of the sequences $(p_{r}^{(i)})$, $i=1,...,k$, is the unique (up to permutation of equal entries) non-decreasing sequence containing the entries in each $(p_r^{(i)})$.
\end{definition}
If for $1 \leq i \leq k$, $(p_r^{(i)})$ is the sequence of weights for $W(X_i)$ then the sequence of weights for $\bigsqcup_{i=1}^k W(X_i)$ is the union of the $(p_r^{(i)})$.\\

The set of weights and the sequence of weights for $X(v_1,v_2)$ decompose as follows.  Trivially we have
\begin{equation}\label{Wdecomp1}
W(X(v_1,v_2)) = \bigsqcup_{s \in \itin(X(v_1,v_2))}W(\itin^{-1}(s))
\end{equation}
Fix $s=s_1...s_m$ and let $J_1(s) = \{i\in \{1,...,m-1\}:s_i\leftrightarrow s_{i+1}\}$ and $J_2(s) = \{1,...,m-1\}\setminus J_1(s)$.  Let
\begin{equation*}
W_1(s) = \bigoplus_{i \in J_1(s)}W(X(s_i,s_{i+1};[s_i]))
\end{equation*}
and let
\begin{equation*}
W_2(s) = \bigoplus_{i \in J_2(s)}W(E(s_i,s_{i+1}))
\end{equation*}
Each $X(s_i,s_{i+1};[s_i])$ is the set of paths from vertex $s_i$ to vertex $s_{i+1}$ on the strongly connected graph with vertcies $[s_i]$, and each $E(s_i,s_{i+1})$ is a path consisting of a single edge from vertex $s_i$ to vertex $s_{i+1}$.  Lemma \ref{itin} implies that $W(\itin^{-1}(s))$ is given by
\begin{equation}\label{Wdecomp2}
W(\itin^{-1}(s)) = \bigsqcup_{w_2 \in W_2(s)}\{w_1+w_2:w_1 \in W_1(s)\}
\end{equation}
Therefore the sequence of weights for paths with itinerary $s$ is the union of translates of compositions of sequences of weights on s.c.c.'s (note that $W_2$ is a finite set, since the edge set is assumed finite).  Then, using \eqref{Wdecomp1}, the sequence of weights for $X(v_1,v_2)$ is the union, over admissible itineraries $s$, of the sequence of weights for paths with itinerary $s$.  \\

To find the asymptotic behaviour of the s.o.w. for $X(v_1,v_2)$ on an arbitrary graph, it thus suffices to find the asymptotic behaviour of the s.o.w. for sets of paths from one fixed vertex to another fixed vertex on a strongly connected graph, and to describe the effect of union and composition on the asymptotic behaviour.  The first point is addressed in Section 3, and the second point in Section 4.  It can be seen that translation will have no effect on the asymptotics.

\section{Strongly connected case}
In this section we compute the asymptotic behaviour for the s.o.w. of $X(v_1,v_2)$ on a strongly connected graph.  The main result of this section is Theorem \ref{irredGraph}.

\subsection{Linear Algebra Preliminaries}\label{secprelim}
First it is convenient to have $|E(u,v)|\leq 1$ for each $u,v \in V$, so that each edge $e \in E$ can be identified with the vertices $I(e)$ and $O(e)$.  Any graph can be converted into a graph that satisfies this condition, and whose paths and path weights are identical to those on the original path.  One way to do this is as follows:  if $|E(u,v)|>1$ then for each $e \in E(u,v)$ replace $e$ with a pair of edges $e_1,e_2$ and a vertex $v_1$ that satisfy $I(e_1) = u$, $O(e_1) = I(e_2) = v_1$ and $O(e_2) = v$, and $W(e_1)=W(e_2)=W(e)/2$.\\

A graph that satisfies $|E(u,v)|\in \{0,1\}$ for every pair of vertices $u$ and $v$ is labeled as follows.  If the graph has $n$ vertices, then label the vertices $1,...,n$, and for $1 \leq i,j \leq n$, if there is an edge from $j$ to $i$ then label it $e_{ij}$, and label the weights of edges as $w_{ij} = W(e_{ij})$.  In this section, a graph $G$ refers to a directed weighted graph with a labeling of the type just described.\\

Define the adjacency matrix $A$ to have entries $a_{ij}$ equal to $1$ if there is an edge from $j$ to $i$, and equal to $0$ otherwise.  To each directed graph with a labeling of the type described above, there corresponds an adjacency matrix.  Conversely, each adjacency matrix describes a directed graph.\\

Let $M$ be an $n\times n$ matrix.  Then $M$ is \emph{non-negative} or $M \geq 0$ if $m_{ij}\geq 0$ for $1\leq i,j \leq n$ and $M$ is \emph{positive} or $M>0$ if $m_{ij}>0$ for $1 \leq i,j \leq n$; the same definitions apply to a vector, treated as an $n\times 1$ matrix.  Also, $M$ is \emph{irreducible} if for each pair $(i,j)$ there is a positive integer $k$ such that $m_{ij}^{(k)}$, the $(i,j)$ entry of the matrix $M^k$, is non-zero.\\

If $A$ is an adjacency matrix and $a_{ij}^{(k)}$ is the $(i,j)^{th}$ entry of $A^k$, then $a_{ij}^{(k)}\neq 0$ if and only if there is a path from $j$ to $i$ of length equal to $k$.  Thus, a graph is strongly connected if and only if its adjacency matrix is irreducible.\\

For a graph $G$ with adjacency matrix $A$ the \emph{period} of $A$ is the positive integer $d = \gcd\{l(x):x \in X(v,v), v \in V\}$.  For $u$ in $V$, and $d$ the period of the adjacency matrix $A$, let $\overline{u}$ denote the set $\{v \in V: \exists x \in X(u,v),\,d|l(x)\}$, that is, the set of vertices $v$ such that there is a path from $u$ to $v$ of length equal to a multiple of the period.  If $A$ is irreducible it can be verified that $\{\overline{u}:u\in V\}$ is an equivalence relation, and so it partitions $V$.\\

On a strongly connected graph, $A^d$ admits a natural decomposition.  Label the vertex set $V = \{1,...,n\}$, and let $e_i$ be the $i^{th}$ standard basis vector in $\complex^n$.  If $j,i \in V$ and $\overline{j}\neq\overline{i}$ then there is no path from $j$ to $i$ of length a multiple of $d$, and vice-versa.  Since $a_{ij}^{(d)}=0 \Leftrightarrow$ there is no path from $j$ to $i$ of length $d$, it follows that $\spn\{e_i: i \in \overline{u}\}$ reduces $A^d$.  This fact is used in Corollary \ref{restr}.\\

Let $\sigma(M)$ denote the set of eigenvalues for $M$ and let $\rho(M) = \max \{|\lambda|:\lambda \in \sigma(M)\}$ denote the spectral radius.  The following is a well-known theorem for non-negative matrices which is proved, for example, in \cite{nnmtcs}.

\begin{theorem}[Perron-Frobenius]Let $M\geq 0$ be irreducible, and let $d$ be its period.  Then
\begin{remunerate}
\item $\rho(M)\in \sigma(M)$ and $\rho(M)$ has a one-dimensional eigenspace,
\item $\{e^{2\pi i k/d}\rho(M):k \in \mathbb{N}\} = \{\lambda \in \sigma(M):|\lambda|=\rho(M)\}$,
\item $M$ has a unique non-negative eigenvector $w$,
\item $w>0$ and satisfies $Mw = \rho(M)w$
\end{remunerate}
\end{theorem}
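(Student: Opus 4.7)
The plan is to prove the four claims in order: existence of a non-negative eigenvector, identification of its eigenvalue with $\rho(M)$, simplicity via a ``min-ratio'' positivity argument, and finally a phase-change argument combined with the period decomposition noted in the excerpt to extract the cyclic peripheral spectrum.

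For existence and positivity, irreducibility ensures that no column of $M$ vanishes, so $T(x)=Mx/\|Mx\|_1$ is a continuous self-map of the simplex $\Delta=\{x\geq 0:\sum_i x_i=1\}$. Brouwer's fixed point theorem produces $w\in\Delta$ with $Mw=\lambda w$, $\lambda\geq 0$. If some coordinate $w_i$ vanished, $(M^k w)_i=\lambda^k w_i=0$ would contradict irreducibility (some $(M^k)_{ij}>0$ with $w_j>0$), so $w>0$. To identify $\lambda$ with $\rho(M)$, applying the same construction to $M^T$ gives a positive left eigenvector $u^T$ with eigenvalue $\lambda'$; pairing yields $\lambda=\lambda'$, and for any eigenpair $(\mu,v)$ the entrywise bound $M|v|\geq|Mv|=|\mu||v|$ combined with $u^T$ on the left forces $|\mu|\leq\lambda$, so $\lambda=\rho(M)$.

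For simplicity and uniqueness, any eigenvector $v$ of $\rho(M)$ satisfies $M|v|=\rho(M)|v|$ (again pair with $u^T$ to rule out strict inequality), so $|v|$ is a non-negative eigenvector. The ``min-ratio'' $c=\min_i |v_i|/w_i$ yields a non-negative eigenvector $|v|-cw$ with a zero coordinate, which must vanish by the positivity argument, so $|v|=cw$. The entrywise equality $|Mv|_i=M|v|_i$ then forces the nonzero entries in each row of $M$ to pair with $v_j$ of a common phase, and irreducibility propagates this to a global phase, showing $v$ is a scalar multiple of $w$. Uniqueness of the non-negative eigenvector in part (3) is the real case of the same argument.

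For the peripheral spectrum, suppose $Mv=\mu v$ with $|\mu|=\rho(M)$. Writing $v_j=|v_j|e^{i\theta_j}$ and $D=\mathrm{diag}(e^{i\theta_j})$, the eigenrelation becomes $(D^{-1}MD)w=\mu w$, and row-by-row comparison with $Mw=\rho(M)w$ using $w>0$ forces $(D^{-1}MD)_{ij}=(\mu/\rho(M))M_{ij}$ whenever $M_{ij}>0$. Chasing around a closed walk of length $\ell$ then yields $(\mu/\rho(M))^\ell=1$; since the $\gcd$ of closed-walk lengths is $d$, $\mu/\rho(M)$ must be a $d$-th root of unity. The converse inclusion uses the block-cyclic structure of $M^d$ across the period classes $\overline{u}$ indicated in the excerpt: each diagonal block is irreducible with spectral radius $\rho(M)^d$, and assembling Perron eigenvectors on the blocks with successive phase shifts $e^{2\pi ik/d}$ realises every $d$-th root of unity times $\rho(M)$ as an actual eigenvalue of $M$. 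The main obstacle is this final step; both directions of the peripheral equality require careful bookkeeping with the combinatorics of closed walks and the period classes, and are the only place where the period $d$ enters the argument non-trivially.
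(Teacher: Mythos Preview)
The paper does not prove this theorem at all: it states the Perron--Frobenius theorem as a well-known result and simply refers the reader to \cite{nnmtcs} for a proof. There is therefore no in-paper argument to compare your sketch against.

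That said, your sketch is a correct and standard route to Perron--Frobenius: Brouwer on the simplex for existence, strict positivity from irreducibility, the left-eigenvector pairing to pin down $\rho(M)$ and to upgrade $M|v|\geq\rho(M)|v|$ to equality, the min-ratio subtraction for simplicity, and the diagonal phase conjugation plus closed-walk $\gcd$ for the peripheral spectrum. The converse inclusion via the block-cyclic structure of the period classes is exactly in the spirit of what the paper does invoke later (its Corollary~\ref{restr}), so your use of the decomposition into classes $\overline{u}$ is consistent with the surrounding text. Two small points worth tightening in a full write-up: the continuity of $x\mapsto Mx/\|Mx\|_1$ on the simplex needs the observation that irreducibility forces every column of $M$ to be nonzero (so $Mx\neq 0$ on $\Delta$), and in the simplicity step the phase-propagation along edges should be stated as ``$M_{ij}>0$ implies $\phi_j=\phi_i$'' and then strong connectivity finishes it; as written, ``irreducibility propagates this to a global phase'' is correct but compressed.
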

Observe that if $M$ is non-negative and irreducible with period $d$, then so is $M^{\top}$, so the above theorem can be translated for left eigenvectors.  A non-negative and irreducible matrix is \emph{primitive} if its period is equal to 1.  The following two results are used to obtain a simple proof of Lemma \ref{paths}.
\begin{theorem}\label{geo}
Let $B$ be a primitive matrix with $r = \rho(B)$ and let $w,u$ be non-negative, non-zero vectors.  Then, $w^{\top}(B/r)^m u$ converges geometrically to a positive constant, i.e., $\lim_{m\to\infty}w^{\top}(B/r)^m u$ exists and is positive, and $|w^{\top}(B/r)^m u - \lim_{m\to\infty}w^{\top}(B/r)^m u|= O(\nu^m)$ for some positive constant $\nu<1$.
\end{theorem}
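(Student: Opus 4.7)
The plan is to isolate the rank-one Perron piece of $B/r$ from a geometrically decaying remainder, using the spectral decomposition guaranteed by Perron-Frobenius.

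First I would exploit primitivity. Since $B$ is non-negative, irreducible, and has period $d = 1$, the Perron-Frobenius theorem stated above implies that $r = \rho(B)$ is a simple eigenvalue and, crucially, is the \emph{only} eigenvalue of modulus $r$: the set $\{e^{2\pi i k/d}\rho(B):k \in \mathbb{N}\}$ reduces to $\{r\}$. Hence every eigenvalue of $B/r$ other than $1$ lies strictly inside the open unit disk. Let $\nu_0 < 1$ denote the maximum modulus of those remaining eigenvalues, and fix any $\nu \in (\nu_0,1)$.

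Next I would construct the Perron projection. Let $\phi > 0$ be the right Perron eigenvector of $B$ and $\psi > 0$ the left Perron eigenvector (both strictly positive by Perron-Frobenius applied to $B$ and $B^{\top}$), normalized so that $\psi^{\top}\phi = 1$. Define $P = \phi\psi^{\top}$; this is a rank-one matrix satisfying $P^2 = P$ and $BP = PB = rP$. Because $r$ is a simple eigenvalue, $P$ is the spectral projection onto the $r$-eigenspace, so if we set $Q = B/r - P$ then $PQ = QP = 0$, whence $(B/r)^m = P + Q^m$ for all $m \geq 1$.

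Now I would bound $\|Q^m\|$. The spectrum of $Q$ is the spectrum of $B/r$ with the eigenvalue $1$ removed, so $\rho(Q) = \nu_0 < 1$; by a standard consequence of the Jordan form (or of Gelfand's formula), $\|Q^m\| = O(\nu^m)$ for the chosen $\nu$. Applying $w^{\top}$ on the left and $u$ on the right,
\begin{equation*}
w^{\top}(B/r)^m u \;=\; w^{\top}P u + w^{\top}Q^m u \;=\; (w^{\top}\phi)(\psi^{\top}u) + O(\nu^m),
\end{equation*}
so the limit equals $(w^{\top}\phi)(\psi^{\top}u)$ and the convergence is geometric at rate $\nu$. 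Positivity of the limit is immediate: $\phi,\psi > 0$ while $w,u \geq 0$ are non-zero, so each scalar product is strictly positive.

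The main obstacle, in my view, is the passage from $\rho(Q) = \nu_0$ to the operator-norm bound $\|Q^m\| = O(\nu^m)$: this requires absorbing polynomial factors $m^{k}$ arising from any non-trivial Jordan blocks of $Q$ into the slightly enlarged geometric factor $\nu > \nu_0$. All remaining steps are routine bookkeeping with the Perron projection.
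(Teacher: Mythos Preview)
Your argument is correct. The spectral decomposition $(B/r)^m = P + Q^m$ with $P = \phi\psi^{\top}$ the Perron projection and $\rho(Q)<1$ is the standard route, and your handling of the one genuine subtlety---absorbing possible Jordan polynomial factors into a slightly enlarged rate $\nu>\nu_0$---is exactly right.

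The paper's own proof takes a different, much shorter path: it simply invokes Theorem~8.5.1 of Horn and Johnson, which already asserts that $(B/r)^m$ converges geometrically to a positive matrix, and then observes that the bounded linear functionals $w^{\top}(\cdot)u$ preserve geometric convergence and positivity of the limit. In effect, you have written out a self-contained proof of the cited Horn--Johnson result rather than appealing to it as a black box. Your version is more informative (it identifies the limit explicitly as $(w^{\top}\phi)(\psi^{\top}u)$ and the rate as any $\nu$ exceeding the subdominant modulus), while the paper's version is more economical; the underlying mechanism---convergence of $(B/r)^m$ to the rank-one Perron projection---is the same in both.
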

\begin{proof}
In \cite{horn}, Theorem 8.5.1., it is proved that $(B/r)^m$ converges geometrically to a positive matrix.  Since a bounded linear mapping preserves geometric convergence, it follows that $w^{\top}(B/r)^mu$ converges geometrically.  Since $w,u \geq 0$, $w,u\neq 0$, and $(B/r)^m$ converges to a positive matrix, it follows that $\lim_{m\to\infty}w^{\top}(B/r)^mu$ is positive.
\end{proof}

\begin{definition} \rm\label{ubar}
Let $G$ be a graph with vertices $V = \{1,...,n\}$, let $d$ be a positive integer and let $U$ be a subset of $V$ with $|U|=k>0$.  Let $A = (a_{ij})_{1\leq i,j \leq n}$ be an $n\times n$ matrix and let $w = (w_i)_{i=1,...,n}$ be an $n\times 1$ vector.  The restriction of $A$ to $U$, denoted $A|_{U}$, is the $k\times k$ matrix $(a_{ij})_{(i,j) \in U\times U}$ and the restriction of $w$ to $U$, denoted $w|_{U}$, is the $k\times 1$ vector $(w_i)_{i \in U}$.
\end{definition}

\begin{corollary}\label{restr}
Let $G$ be a strongly connected graph for which the adjacency matrix $A$ has period $d$.  Let $u$ be any vertex on $G$ and let $B = A^d|_{\overline{u}}$, then $B$ is a primitive matrix.  If $w$ denotes the positive eigenvector for $A$, then the restriction $w|_{\overline{u}}$ is the unique positive eigenvector for $B$ and $\rho(B) = \rho(A)^d$.
\end{corollary}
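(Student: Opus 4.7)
The plan is to establish primitivity of $B$ and identify $w|_{\overline{u}}$ as its Perron eigenvector. The starting point is a sharpening of the block structure of $A^d$ already noted prior to the Perron--Frobenius theorem: if $i \in \overline{u}$ and $(A^d)_{ij} \neq 0$, then $j \in \overline{u}$. The key observation I would establish first is that for any vertex $v$, the lengths of paths from $u$ to $v$ all share a common residue modulo $d$; this follows by concatenating any two such paths with a common return path $v \to u$ (which exists by irreducibility of $A$) and using that every cycle length at $u$ is a multiple of $d$. Hence $v \in \overline{u}$ iff this residue is $0$. A path of length $d$ from $j$ to $i$ appended to any path $u \to j$ yields a path $u \to i$ whose length differs by $d$, so the residues at $i$ and $j$ agree; in particular $i \in \overline{u}$ forces $j \in \overline{u}$. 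As an immediate consequence, $B^k = A^{dk}|_{\overline{u}}$ for every $k \geq 1$.

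Second, I would transfer the Perron eigenvector from $A$ to $B$. From $Aw = \rho(A) w$ one gets $A^d w = \rho(A)^d w$, and restricting both sides to coordinates in $\overline{u}$, the block structure above gives $B \cdot w|_{\overline{u}} = \rho(A)^d \, w|_{\overline{u}}$. Since $w > 0$ by Perron--Frobenius, the restriction $w|_{\overline{u}}$ is a strictly positive eigenvector of $B$ corresponding to the eigenvalue $\rho(A)^d$.

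Third, I would verify that $B$ is primitive, i.e., irreducible with period one. For irreducibility, given $i, j \in \overline{u}$, irreducibility of $A$ yields a path $i \to u$ of some length $c$; since $i \in \overline{u}$ there is also a path $u \to i$ of length a multiple of $d$, and their concatenation is a cycle at $u$ of length $\equiv c \pmod{d}$, forcing $d \mid c$. Prepending a path $u \to j$ of length a multiple of $d$ (available because $j \in \overline{u}$) yields an $i \to j$ path of length a multiple of $d$, hence a nonzero entry in a suitable power of $B$. For period one, cycles at $u$ in $B$ of length $k$ correspond bijectively to cycles at $u$ in $A$ of length $kd$; since the gcd of the latter set of lengths is $d$, the gcd of the corresponding $k$'s is $1$.

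Once $B$ is known to be primitive and $w|_{\overline{u}}$ is exhibited as a strictly positive eigenvector, the Perron--Frobenius theorem immediately gives both uniqueness (up to scalar) of that eigenvector and the identification of its eigenvalue as $\rho(B)$, so that $\rho(B) = \rho(A)^d$. The main obstacle I anticipate is the careful bookkeeping of path-length residues modulo $d$ required in the first and third steps; once the residue-mod-$d$ invariant for paths $u \to v$ is in hand, the rest reduces to a routine application of Perron--Frobenius.
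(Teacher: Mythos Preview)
Your proof is correct and complete. It differs from the paper's argument in how primitivity is established. The paper proceeds spectrally: from the fact that $\spn\{e_i:i\in\overline{u}\}$ reduces $A^d$ it deduces $\sigma(B)\subset\{\lambda^d:\lambda\in\sigma(A)\}$, and then invokes the Perron--Frobenius description of the peripheral spectrum of $A$ (the $d$-th roots of unity times $\rho(A)$) to conclude that all peripheral eigenvalues of $B$ collapse to the single value $\rho(A)^d$; Perron--Frobenius for $B$ then forces its period to be $1$. You instead argue combinatorially, computing the period of $B$ directly from cycle lengths via $\{k:(B^k)_{uu}>0\}=\{k:(A^{dk})_{uu}>0\}$ and the fact that the gcd of closed-walk lengths at $u$ in $A$ is $d$. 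Your route is more elementary in that it avoids the root-of-unity clause of Perron--Frobenius, and along the way you supply the residue-mod-$d$ argument that the paper only asserts (``it can be verified that $\{\overline{u}:u\in V\}$ is an equivalence relation''). The paper's route, on the other hand, makes the spectral identity $\rho(B)=\rho(A)^d$ fall out immediately from the eigenvalue inclusion rather than via the restricted Perron vector. One small wording slip: where you write ``prepending a path $u\to j$'' you mean appending it to the $i\to u$ path to form $i\to u\to j$; the mathematics is unaffected.
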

\begin{proof}
Since $A\geq 0$, $B\geq 0$.  Also, $B$ is irreducible, since for every $v \in \overline{u}$, $v \neq u$, the fact that $G$ is strongly connected implies that there is path from $v$ to $u$, and since $A$ has period $d$, and since by assumption there is a path from $u$ to $v$ whose length is a multiple of $d$, it follows that there is a path from $v$ to $u$ whose length is a multiple of $d$.  Since $\spn\{e_i:  i \in \overline{u}\}$ reduces $A^d$ as mentioned earlier, it follows that each eigenvector of $B$ is the restriction to $\overline{u}$ of an eigenvector of $A^d$, and in particular $\sigma(B) \subset \{\lambda^d: \lambda \in\sigma(A)\}$.  In particular, if $\lambda \in \sigma(B)$ and $|\lambda| = \rho(B)$, then $\lambda = \rho(B)$.  Since $B$ satisfies the hypotheses of the Perron-Frobenius theorem, it follows that $B$ must have period 1, i.e., $B$ is primitive.  The rest of the corollary follows from the above observations.
\end{proof}

\subsection{Graph Approximation}
A weighted graph is \emph{uniformly weighted} if each edge has the same weight assigned to it.  In this section, for an arbitrary directed weighted graph $G$ we construct uniformly weighted graphs containing the relevant structure of $G$.\\

An \emph{approximation base} $b \in \reals ^+$ is \emph{admissible} if $b<\min w_{ij}$.  For $b$ admissible, define the \emph{approximate graph} $G(b)$ of the graph $G$ as follows.  For each $e_{ij} \in E$ let $C_{ij} = \left\lfloor w_{ij} / b \right\rfloor$ and replace $e_{ij}$ with a chain of $C_{ij}$ edges and $C_{ij}-1$ vertices.  Assign the weight $b$ to each edge on $G(b)$, so that $G(b)$ is a uniformly weighted graph.  Note that paths on $G$ are in one to one correspondence with paths on $G(b)$ that start and end on vertices corresponding to vertices in $G$.  Thus for a path $x$ on $G$, we let $x_b$ denote the corresponding path on $G(b)$.  Also note that $G$ strongly connected $\Leftrightarrow G(b)$ strongly connected and that $G$ not a cycle $\Leftrightarrow G(b)$ not a cycle.  For an example of an approximate graph see Figure \ref{figapproxgraph}.\\

\begin{figure}
\centering
\mbox{\subfigure[A weighted directed graph, with edge weights indicated.]{\includegraphics[height=60mm,width=75mm]{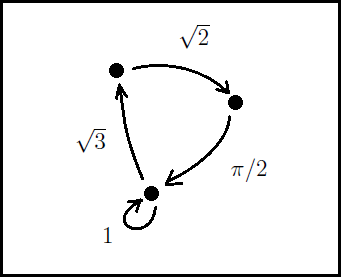}}\quad
\subfigure[Approximate graph:  each edge is assigned the weight $1/2$.]{\includegraphics[height=60mm,width=75mm]{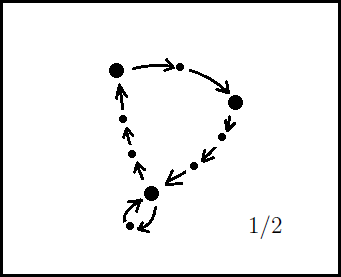} }}
\caption{An example of a graph and an approximate graph with $b=1/2$.}
\label{figapproxgraph}
\end{figure}

If $\{w_{ij}:1\leq i,j\leq n\} \subset \{ kb: k \in\mathbb{N}\}$ then the weights of paths on $G$ and on $G(b)$ correspond exactly.  More generally, the weights of paths on the approximate graph are close to the weights of the corresponding paths on the original graph.  This is expressed more precisely in the following lemma.

\begin{lemma}\label{uniform}
For each $\epsilon>0$ there exists $b\in\reals^+$ such that $|W(x_b)/W(x)-1|<\epsilon$ for all paths $x$ on $G$ and corresponding paths $x_b$ on $G(b)$.
\end{lemma}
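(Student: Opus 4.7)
The plan is to establish a per-edge multiplicative bound and then show it survives summation. For each edge $e_{ij}$ on $G$, the corresponding chain on $G(b)$ has weight $b \lfloor w_{ij}/b \rfloor$, so the defining inequality $y - 1 < \lfloor y \rfloor \le y$ applied to $y = w_{ij}/b$ yields
\begin{equation*}
w_{ij} - b \;<\; b\lfloor w_{ij}/b \rfloor \;\le\; w_{ij}.
\end{equation*}

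Let $w_{\min} = \min_{i,j} w_{ij}$, which is positive since $E$ is finite. Dividing the left inequality by $w_{ij}$ and using $w_{ij} \ge w_{\min}$ gives the multiplicative bound $b\lfloor w_{ij}/b\rfloor \ge w_{ij}(1 - b/w_{\min})$ for every edge. This is the key step: by replacing the additive error $b$ with the multiplicative factor $1 - b/w_{\min}$, the bound becomes stable under summation over paths of arbitrary length.

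Next I sum over the edges of a path $x = x_1 \cdots x_k$. Since $W(x_b) = \sum_{i=1}^k b\lfloor W(x_i)/b\rfloor$ and $W(x) = \sum_{i=1}^k W(x_i)$, the two inequalities above combine to
\begin{equation*}
\bigl(1 - b/w_{\min}\bigr) W(x) \;\le\; W(x_b) \;\le\; W(x),
\end{equation*}
so $|W(x_b)/W(x) - 1| \le b/w_{\min}$ uniformly in $x$. Given $\epsilon > 0$, choose any admissible $b$ with $b < \epsilon\, w_{\min}$ (for instance $b = \tfrac{1}{2}\min(\epsilon, 1)\, w_{\min}$, which is both admissible and satisfies the bound); this completes the proof.

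I don't anticipate a genuine obstacle here: the only subtlety is recognizing that the naive additive bound $W(x) - kb \le W(x_b)$ degrades with path length, so one must pass through the multiplicative bound via $w_{\min}$ before summing. Once that observation is made, the argument is a short calculation.
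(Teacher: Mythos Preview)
Your proof is correct and follows essentially the same approach as the paper: both establish a per-edge additive error of at most $b$, then convert to a multiplicative bound via $w_{\min}$ so that the estimate survives summation over arbitrarily long paths. The only cosmetic difference is that the paper first sums the additive errors to get $|W(x)-W(x_b)|\le \delta\,l(x)$ and then uses $l(x)\le W(x)/w_{\min}$, whereas you convert to the multiplicative form per edge before summing; the resulting bound $|W(x_b)/W(x)-1|\le b/w_{\min}$ is identical.
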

\begin{proof}
To each edge $e_{ij}$ on $G$ there corresponds a path $x_{ij}$ on $G(b)$, and $W(x_{ij}) = C_{ij}b = \left\lfloor w_{ij}/b \right\rfloor b$.  As $b \rightarrow 0^+$, $\delta = \max\{|w_{ij} - W(x_{ij})|\} \rightarrow 0$.  Let $w_{min} = \min w_{ij}>0$.  Then for any $x$, $x_b$,
\begin{equation*}
|W(x)- W(x_b)| \leq \delta l(x) \leq \delta \frac{W(x)}{w_{min}}
\end{equation*}
which gives
\begin{equation*}
|W(x_b)/W(x) - 1| \leq \delta/w_{min}
\end{equation*}
For $\epsilon>0$, taking $b$ small enough so that $\delta<w_{min}\epsilon$ gives the desired result.
\end{proof}

As $b$ approaches zero the approximate graph $G(b)$ becomes very large.  The following result is useful in relating the behaviour of $G(b)$ to the original graph.

\begin{lemma}\label{collapse}
Let $G$ be strongly connected and let $G(b)$ be an approximate graph, and let $A = (a_{ij})$ and $A(b)$ be their respective adjacency matrices.  Let $\lambda = \rho(A(b))$, then the matrix $B$ whose entries are given by $a_{ij}\lambda^{-C_{ij}}$ has $\rho(B)=1$.
\end{lemma}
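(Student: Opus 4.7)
The plan is to transport the Perron eigenvector of $A(b)$ down to $V$ and show that its restriction is a positive eigenvector of $B$ with eigenvalue $1$; then Perron--Frobenius applied to $B$ itself will force $\rho(B)=1$.

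First, since $G$ is strongly connected the same is true of $G(b)$, so $A(b)$ is a non-negative irreducible matrix. By Perron--Frobenius there is a positive vector $u$ indexed by the vertex set $V'$ of $G(b)$ satisfying $A(b)u = \lambda u$. The vertices $V'$ consist of the original vertices $V$ together with all the chain-interior vertices introduced when each edge $e_{ij}$ of $G$ is replaced by a chain $j=v^{(ij)}_0 \to v^{(ij)}_1 \to\cdots\to v^{(ij)}_{C_{ij}}=i$ of length $C_{ij}$.

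Next I would analyze $u$ along each such chain. For $1\le k\le C_{ij}-1$, the vertex $v^{(ij)}_k$ has a unique predecessor in $G(b)$, namely $v^{(ij)}_{k-1}$; the eigenvector equation at $v^{(ij)}_k$ therefore reads $u_{v^{(ij)}_{k-1}} = \lambda\, u_{v^{(ij)}_k}$, so by induction $u_{v^{(ij)}_k} = \lambda^{-k} u_j$ for $0 \le k \le C_{ij}-1$ (taking $v^{(ij)}_0 = j$). At a vertex $i\in V$, the predecessors in $G(b)$ are exactly the penultimate chain vertices $v^{(ij)}_{C_{ij}-1}$, one for each $j$ with $a_{ij}=1$ (and this vertex coincides with $j$ itself in the degenerate case $C_{ij}=1$, where $\lambda^{-(C_{ij}-1)}=1$ makes the formula consistent). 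Hence
\begin{equation*}
\lambda\, u_i \;=\; (A(b)u)_i \;=\; \sum_j a_{ij}\, u_{v^{(ij)}_{C_{ij}-1}} \;=\; \sum_j a_{ij}\,\lambda^{-(C_{ij}-1)}\, u_j \;=\; \lambda \sum_j a_{ij}\lambda^{-C_{ij}}\, u_j .
\end{equation*}
Dividing by $\lambda$ gives $u_i = \sum_j b_{ij} u_j$ for every $i\in V$, i.e.\ $B(u|_V) = u|_V$.

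Finally, the matrix $B$ has exactly the same zero pattern as $A$ (since $\lambda>0$), so it is itself non-negative and irreducible. Its positive eigenvector $u|_V$ must, by Perron--Frobenius, correspond to the eigenvalue $\rho(B)$; consequently $\rho(B)=1$. There is no serious obstacle here: the only delicacy is keeping the bookkeeping along chains straight and verifying that the $C_{ij}=1$ case is handled without a separate argument, which the geometric formula $u_{v^{(ij)}_k}=\lambda^{-k}u_j$ does automatically.
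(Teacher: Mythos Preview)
Your proof is correct and follows essentially the same approach as the paper: restrict the Perron eigenvector of $A(b)$ to the original vertex set $V$, propagate along the chains to verify that the restriction is a positive eigenvector of $B$ with eigenvalue $1$, and then invoke Perron--Frobenius on $B$ to conclude $\rho(B)=1$. The only cosmetic difference is that the paper works with the \emph{left} Perron eigenvector while you use the right one; your chain bookkeeping is in fact more explicit than the paper's terse justification.
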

\begin{proof}
In the next section it is shown that the adjacency matrix of a strongly connected graph has spectral radius $\lambda\geq 1$.  In particular, $\lambda \neq 0$, and the matrix $A(b)/\lambda$ has $\rho(A(b)/\lambda)=1$.  Since $A(b)/\lambda$ is non-negative and irreducible, by the Perron-Frobenius theorem
$\frac{A(b)}{\lambda}$ has a positive left eigenvector $v^{\top}$ such that $v^{\top}A(b)/\lambda = v^{\top}$.  Moreover, the restriction of $v^{\top}$ to the vertices of $G$ is a positive vector that satisfies $v^{\top}B = v^{\top}$.  This is because a unit vector corresponding to vertex $j$, in being set to vertex $i$ by the application of $(A(b)/\lambda)^{C_{ij}}$, is multiplied by a factor $\lambda^{-C_{ij}}$ .  Since $B$ is irreducible and non-negative, by the Perron-Frobenius theorem $v^{\top}$ is the unique positive left eigenvector for $B$, and the eigenvalue corresponding to $v^{\top}$ is equal to $\rho(B)$.  Therefore $\rho(B)=1$.
\end{proof}

\subsection{Weight Distribution}
In this section, we consider a strongly connected graph $G$, and for arbitrary vertices $v_1$ and $v_2$ not necessarily distinct, we determine the asymptotic behaviour for the s.o.w. of $X(v_1,v_2)$, the set of paths from $v_1$ to $v_2$.  Recall that a \emph{cycle} is a strongly connected graph in which each vertex has in-degree and out-degree both equal to 1.

\begin{lemma}
Let $G$ be a strongly connected graph and let $A$ be its adjacency matrix.  Then the spectral radius $\rho(A)\geq 1$, and $\rho(A)=1$ if and only if $G$ is a cycle.
\end{lemma}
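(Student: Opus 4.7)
The plan is to combine the Perron--Frobenius theorem with a simple double-counting identity. Since $G$ is strongly connected, its adjacency matrix $A$ is irreducible and non-negative, so Perron--Frobenius supplies a positive eigenvector $w>0$ with $Aw=\rho(A)w$. Strong connectivity also forces every vertex to have both in-degree and out-degree at least one, since otherwise the vertex could not reach, or be reached from, the rest of the graph.

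For the lower bound $\rho(A)\geq 1$, I would sum the $n$ components of the eigenvalue equation and interchange the order of summation:
\begin{equation*}
\rho(A)\sum_{i}w_i \;=\; \sum_i\sum_j a_{ij}w_j \;=\; \sum_j w_j \cdot \mathrm{outdeg}(j),
\end{equation*}
where the last step uses the paper's convention $a_{ij}=1$ iff there is an edge from $j$ to $i$, so that the $j$-th column sum of $A$ is the out-degree of $j$. Since every out-degree is at least one and every $w_j$ is strictly positive, the right-hand side dominates $\sum_j w_j$, and dividing by this positive quantity yields $\rho(A)\geq 1$.

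For the equality claim, assume $\rho(A)=1$. Then the identity above collapses to $\sum_j w_j\bigl(\mathrm{outdeg}(j)-1\bigr)=0$, and since each summand is non-negative with $w_j>0$, every out-degree must equal exactly one. A global edge count now gives $\sum_i\mathrm{indeg}(i)=\sum_j\mathrm{outdeg}(j)=n$, and combined with $\mathrm{indeg}(i)\geq 1$ this forces $\mathrm{indeg}(i)=1$ for every $i$ as well. Hence $A$ is a permutation matrix, and strong connectivity forces its cycle decomposition to consist of a single $n$-cycle, so $G$ is a cycle in the sense defined in the paper. Conversely, if $G$ is a cycle on $n$ vertices then $A$ is the permutation matrix of a single $n$-cycle, whose eigenvalues are the $n$-th roots of unity and thus all have modulus one, giving $\rho(A)=1$.

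I do not expect a substantial obstacle. The only points that deserve care are keeping track of the row/column convention for $A$ so that column sums really do count out-degrees (rather than in-degrees), and the small combinatorial observation that a strongly connected $\{0,1\}$-matrix which is also a permutation matrix must correspond to a single $n$-cycle rather than a disjoint union of smaller cycles.
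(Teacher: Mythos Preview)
Your proof is correct and follows essentially the same approach as the paper: both apply Perron--Frobenius to obtain a positive eigenvector $w$ and then compare $\sum_i(Aw)_i$ with $\sum_i w_i$ (the paper phrases this as $\|Aw\|_1\geq\|w\|_1$ for $w\geq 0$). Your version is in fact more explicit than the paper's, spelling out the edge-count step that forces all in-degrees to equal one once all out-degrees do, and handling the converse direction via the eigenvalues of a cyclic permutation matrix, whereas the paper leaves these points implicit.
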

\begin{proof}
If $G$ is strongly connected then in particular, for each vertex $v$ there is a vertex $u$ such that $v \rightarrow u$.  For a vector $w = (w_1,...,w_n)^{\top}$, define $\|w\|_1 = \sum_{i=1}^n |w_i|$, then if $w \geq 0$, $\|Aw\|_1 \geq\|w\|_1$.  Since $A$ is non-negative and irreducible, the Perron-Frobenius theorem applies, and there exists a unique positive eigenvector $w$ whose eigenvalue is equal to the spectral radius $\rho(A)$, and it follows from the above observation that the eigenvalue for $w$ must be $\geq 1$.  If for some $v \in V$ there exist $u_1\neq u_2$ such that $v \rightarrow u_1$ and $v \rightarrow u_2$ then $\|Aw\|_1 >\|w\|_1$.  For a strongly connected graph this is only possible if $G$ is not a cycle.
\end{proof}

If $G$ is a cycle the s.o.w. is easily described.

\begin{lemma}\label{cycle}Let $G$ be a cycle, and let $w_0$ be the weight of any path that goes exactly once around the cycle, called the \emph{cycle weight}.  Then if $(p_r)$ denotes the sequence of weights for $X(v_1,v_2)$,
\begin{equation*}
\lim_{r\to\infty} p_r/r=w_0
\end{equation*}
\end{lemma}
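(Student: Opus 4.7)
The plan is to exploit the very rigid structure of a cycle: every vertex has out-degree $1$, so from any starting vertex the path of each length is completely determined. This will let me write the sequence of weights explicitly, and the limit will fall out by inspection.

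First, I would fix notation. Suppose the cycle has $n$ vertices (and therefore $n$ edges), so $w_0$ is the sum of the $n$ edge weights and is well-defined independent of the traversal that realizes it (any path that goes once around uses each edge exactly once). Since every vertex of $G$ has out-degree $1$, for each $k \geq 1$ there is a \emph{unique} path of length $k$ starting at $v_1$. Let $d$ be the smallest positive integer such that the unique path of length $d$ starting at $v_1$ ends at $v_2$; equivalently, $d$ is the ``distance from $v_1$ to $v_2$ around the cycle,'' with the convention $d = n$ in the case $v_1 = v_2$ (paths are non-empty, so we must go all the way around). Let $w^{\ast}$ denote the weight of this length-$d$ initial path.

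Next I would observe that, again by out-degree $1$, the unique path of length $k$ starting at $v_1$ ends at $v_2$ if and only if $k \in \{d, d+n, d+2n, \ldots\}$. Thus $X(v_1,v_2)$ consists of the initial path of weight $w^{\ast}$ together with the same initial path followed by $j$ complete traversals of the cycle, for each $j \geq 1$. Each complete traversal contributes weight exactly $w_0$, so the weights, already in increasing order, are
\begin{equation*}
p_r \;=\; w^{\ast} + (r-1)\, w_0, \qquad r = 1,2,\ldots
\end{equation*}

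From this explicit formula,
\begin{equation*}
\frac{p_r}{r} \;=\; w_0 \;+\; \frac{w^{\ast} - w_0}{r} \;\longrightarrow\; w_0 \quad \text{as } r\to\infty,
\end{equation*}
which is the claim. There is essentially no obstacle here; the only point that requires a little care is handling the case $v_1 = v_2$ (where the ``trivial'' path is not allowed because paths are required to be non-empty), and this is dispatched by taking $d = n$ so that $w^{\ast} = w_0$ and the formula still holds with $p_r = r w_0$.
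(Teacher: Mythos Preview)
Your proof is correct and follows essentially the same approach as the paper's own proof: identify the weight of the shortest path from $v_1$ to $v_2$, observe that every path in $X(v_1,v_2)$ is this shortest path followed by some number of full traversals of the cycle, and read off the limit from the resulting arithmetic progression $p_r = w^{\ast} + (r-1)w_0$. Your version is slightly more explicit (justifying uniqueness via out-degree~$1$ and handling the $v_1=v_2$ case), but the argument is the same.
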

\begin{proof}
Let $w_{21}$ be the weight of the shortest path from $v_1$ to $v_2$.  Then the sequence of weights $(p_r)$ is given by $p_r = w_{21} + w_0r$, $r = 0,1,2,...$.  In particular, $\lim_{r \to\infty} p_r/r = w_0$, the cycle weight.
\end{proof}

If $G$ is not a cycle, first we consider the case of a uniformly weighted graph, for which the asymptotic behaviour of the s.o.w. is more easily computed.

\begin{lemma}\label{paths}
Suppose $G$ is a strongly connected uniformly weighted graph which is not a cycle, and let $b>0$ be the weight of each edge.  Let $A$ be the adjacency matrix for $G$, and let $\lambda=\rho(A)$ be its spectral radius and $d>0$ its period.  For vertices $v_1,v_2 \in V$ not necessarily distinct, let $X$ be the set of paths from $v_1$ to $v_2$ on $G$ and let $(p_r)$ be the sequence of weights for $X$.  Then
\begin{equation*}
\lim_{r\to\infty}p_r/\log r = b/\log \lambda
\end{equation*}
\end{lemma}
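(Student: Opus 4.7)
The plan is to reduce everything to counting paths by length, since in a uniformly weighted graph we have $W(x) = b\,l(x)$. Let $N_k$ denote the number of paths in $X(v_1,v_2)$ of length $k$; because all weights equal $b$, the sequence $(p_r)$ is simply the list $bk$ repeated $N_k$ times in order of increasing $k$. Define the cumulative count $S_k = \sum_{j=0}^{k} N_j$. Then a path of rank $r$ has weight $bk$ exactly when $S_{k-1} < r \leq S_k$, so the whole statement reduces to estimating $\log S_k$ in terms of $k$.

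First I would identify $N_k$ with the matrix entry $a_{v_2 v_1}^{(k)}$ and use Corollary \ref{restr} and Theorem \ref{geo} to pin down its asymptotics. The graph is strongly connected and not a cycle, so by the preceding lemma $\lambda = \rho(A) > 1$. Paths from $v_1$ to $v_2$ have lengths in a single residue class $k_0$ mod $d$; for $k$ in that class, writing $k = md + k_0$, Corollary \ref{restr} says that $B = A^d|_{\overline{v_1}}$ is primitive with spectral radius $\lambda^d$, and the relation $a_{v_2 v_1}^{(md+k_0)} = e_{v_2}^{\top} A^{k_0} (A^d)^m e_{v_1}$ combined with Theorem \ref{geo} (after restricting the vectors to the appropriate coset $\overline{v_1}$) yields
\begin{equation*}
\lim_{m\to\infty} \lambda^{-(md+k_0)}\, a_{v_2 v_1}^{(md+k_0)} = c
\end{equation*}
for some positive constant $c$. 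In particular, $N_k \asymp \lambda^k$ along the residue class $k_0$ mod $d$, and $N_k = 0$ otherwise.

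Next I would sum: since $N_j$ is supported on $j \equiv k_0 \pmod d$ and is $\asymp \lambda^j$ there, the partial sums satisfy $S_k \asymp \lambda^k$, i.e.\ there exist constants $0 < c_1 \leq c_2 < \infty$ such that $c_1 \lambda^k \leq S_k \leq c_2 \lambda^k$ for all sufficiently large $k$ in the support class (and $S_k = S_{k-1}$ for $k$ not in the class, which is irrelevant). Taking logs gives $\log S_k = k \log \lambda + O(1)$. For any rank $r$ in the window $S_{k-1} < r \leq S_k$ (with $k$ in the support class), we therefore have $\log r = k \log \lambda + O(1)$ and $p_r = bk$, so
\begin{equation*}
\frac{p_r}{\log r} = \frac{bk}{k\log\lambda + O(1)} \longrightarrow \frac{b}{\log\lambda},
\end{equation*}
as $r \to \infty$ (equivalently $k \to \infty$).

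The main technical obstacle is extracting the two-sided bound $N_k \asymp \lambda^k$ along the correct residue class from the Perron-Frobenius machinery already assembled. Theorem \ref{geo} gives the positive limit provided one feeds it non-zero, non-negative vectors on the block where $B$ acts primitively; the care is in verifying that the standard basis vectors $e_{v_1}$ and $e_{v_2}$ (after the length-$k_0$ shift by $A^{k_0}$) land in the right coset $\overline{v_1}$ so that Corollary \ref{restr} applies with a genuinely positive limit. Once this positive constant is in hand, the geometric growth of $S_k$ and the $\log r \sim k \log \lambda$ estimate are immediate, and the upper bound on $N_k$ follows from the same asymptotic (or trivially from $N_k \leq \|A^k\|$). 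Everything else is bookkeeping.
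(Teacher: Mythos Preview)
Your proposal is correct and follows essentially the same route as the paper: reduce to counting paths of each length via $N_k = a^{(k)}_{v_2v_1}$, use Corollary~\ref{restr} and Theorem~\ref{geo} to get $N_k \sim c\lambda^k$ along the relevant residue class mod $d$, sum to get $\log S_k = k\log\lambda + O(1)$, and read off $p_r/\log r \to b/\log\lambda$. The only cosmetic differences are that the paper restricts to $\overline{v_2}$ (applying $A^i$ to $e_{v_1}$ first) rather than your $\overline{v_1}$, and it carries the geometric error term explicitly instead of using $\asymp$; the technical care point you flag about the basis vectors landing in the right coset is exactly the one the paper handles by that choice of restriction.
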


{\em Proof.}
For a path $x$ on $G$, $W(x)=l(x)b$.  There is an integer $i \in \{0,1,...,d-1\}$ such that each path from $v_1$ to $v_2$ has length $md+i$ for some integer $m$.  As defined in Section \ref{secprelim}, let $\overline{v_2}$ denote the set of vertices $y$ for which there is a path either from $y$ to $v_2$ or from $v_2$ to $y$ whose length is a multiple of $d$.  Let $u=(0,0,...,0,1,0,...)$ be the vector equal to 1 in the $v_1$ entry and zero elsewhere.  As in Definition \ref{ubar} let $B = A^d|_{\overline{v_2}}$ and redefine $u$ to be the restriction $A^iu|_{\overline{v_2}}$.  Let $w=(0,0,...,0,1,0,...)$ on $\overline{v_2}$ be the vector equal to 1 in the $v_2$ entry, and zero elsewhere.  Then $c_m = w^{\top}B^mu$ counts the number of paths from $v_1$ to $v_2$ of length $md+i$.  Using Corollary \ref{restr} and Theorem \ref{geo}, $\rho(B) = \lambda^d$ and $c_m\lambda^{-dm}$ converges geometrically to some positive constant $c$.  In other words, $c_m = c\lambda^{md}(1+R(m))$ with $|R(m)|\leq R\nu^m$ for some $R>0$ and $0<\nu<1$, so that
\begin{equation}\label{rest}
\left |\sum_{j=0}^m c_j - c\sum_{j=0}^m \lambda^{jd}\right | \leq R\sum_{j=0}^m (\lambda^j\nu)^d = o(\lambda^{md})
\end{equation}\\
For each $r$, $p_r = (md+i)b$ for some $m$ and some $\Delta$ that satisfy
\begin{equation*}
r = \sum_{j=0}^{m-1} c_j + \Delta
\end{equation*}
and $\Delta \in \{1,...,c_m\}$.  Using \eqref{rest},
\begin{eqnarray*}
r &=& c\frac{\lambda^{(m+1)d}-1}{\lambda^d-1} + o(\lambda^{md})+ \Delta\\
&=& \lambda^{md}(c\frac{\lambda^d-\lambda^{-md}}{\lambda^d-1} + o(1)+ \lambda^{-md}\Delta)
\end{eqnarray*}
Taking logs,
\begin{equation}\label{thelogr}
\log r = md\log \lambda + \log(c\frac{\lambda^d-\lambda^{-md}}{\lambda^d-1} + o(1)+ \lambda^{-md}\Delta)
\end{equation}
and note that the argument to the second $\log$ is both upper- and lower-bounded by positive numbers, i.e.,
\begin{equation*}
\liminf_{m\to\infty}c\frac{\lambda^d-\lambda^{-md}}{\lambda^d-1} + o(1) + \lambda^{-md}\Delta \geq c\frac{\lambda^d}{\lambda^d-1}>0
\end{equation*}
and
\begin{equation*}
\limsup_{m\to\infty}c\frac{\lambda^d-\lambda^{-md}}{\lambda^d-1} + o(1)+ \lambda^{-md}\Delta \leq c\frac{\lambda^d}{\lambda^d-1} + \limsup_{m\to\infty}\lambda^{-md}\Delta<\infty
\end{equation*}
since $o(1)\rightarrow 0$ and $\Delta\geq 0$, but $\Delta = O(\lambda^{md})$.  Using this observation and \eqref{thelogr},
\begin{equation*}
\log r / p_r = \log r / ((md+i)b) = \log\lambda / b + O(1/m)
\end{equation*}
and so
\begin{equation*}
\lim_{r\to\infty}p_r/\log r = b/\log \lambda \in \reals ^+ \cvd
\end{equation*}

For a general strongly connected graph we get a similar result after taking a limit of approximate graphs.
\begin{theorem}\label{irredGraph}
Suppose $G$ is strongly connected and is not a cycle.  For $v_1,v_2\in V$ not necessarily distinct, let $X$ be the set of paths from $v_1$ to $v_2$ on $G$ and let $(p_r)$ be the sequence of weights for $X$.  Then
\begin{equation*}
\lim_{r\to\infty}p_r/\log r = s
\end{equation*}
where $s>0$ is such that the matrix $B$ with entries $a_{ij}e^{-s^{-1}w_{ij}}$ has $\rho(B)=1$.
\end{theorem}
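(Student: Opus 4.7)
The plan is to approximate $G$ by its uniformly weighted approximate graphs $G(b)$ and invoke Lemma \ref{paths}. Fix an admissible $b > 0$. Since $G$ is strongly connected and not a cycle, neither is $G(b)$. Paths from $v_1$ to $v_2$ on $G$ are in bijection with such paths on $G(b)$ via $x \mapsto x_b$, so if $(q_r^{(b)})$ denotes the sequence of weights on $G(b)$ for this family, Lemma \ref{paths} gives $\lim_{r\to\infty} q_r^{(b)}/\log r = s(b)$, where $s(b) := b/\log \lambda(b)$ and $\lambda(b) := \rho(A(b))$.

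Next I would show $s(b) \to s$ as $b \to 0^+$. By Lemma \ref{collapse}, the matrix $B_b$ with entries $a_{ij} \lambda(b)^{-C_{ij}}$ satisfies $\rho(B_b) = 1$. Writing $\lambda(b)^{-C_{ij}} = e^{-C_{ij} b / s(b)}$ and using $|C_{ij} b - w_{ij}| < b$, one sees that $B_b$ and the matrix $B(s(b))$ with entries $a_{ij} e^{-w_{ij}/s(b)}$ differ entrywise by $O(b)$, provided $s(b)$ is bounded above and away from $0$. Continuity of the spectral radius then forces $\rho(B(s(b))) \to 1$. Since $s \mapsto \rho(B(s))$ is continuous on $(0,\infty)$, strictly increasing (each entry $a_{ij}e^{-w_{ij}/s}$ is strictly increasing in $s$, so standard Perron theory for the irreducible matrix $B(s)$ yields strict monotonicity of the Perron root), tends to $0$ as $s \to 0^+$, and to $\rho(A) > 1$ as $s \to \infty$, there is a unique $s > 0$ with $\rho(B(s)) = 1$, and $s(b) \to s$.

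Finally I would pull the asymptotics back to $G$ using Lemma \ref{uniform}. Given $\epsilon > 0$, choose $b$ small so that $(1-\epsilon)W(x) \le W(x_b) \le (1+\epsilon)W(x)$ for every path $x$. For each $r$, the $r$ paths on $G$ of smallest weight all satisfy $W(x_b) \le (1+\epsilon) p_r$, so at least $r$ paths on $G(b)$ have weight $\le (1+\epsilon) p_r$, giving $q_r^{(b)} \le (1+\epsilon) p_r$. The reverse inequality $q_r^{(b)} \ge (1-\epsilon) p_r$ follows by an identical argument with the roles swapped. Dividing by $\log r$ and sending $r \to \infty$ yields $s(b)/(1+\epsilon) \le \liminf p_r/\log r \le \limsup p_r/\log r \le s(b)/(1-\epsilon)$; letting $b \to 0$ (whence $\epsilon \to 0$ and $s(b) \to s$) gives $\lim p_r/\log r = s$.

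The main obstacle is step two: establishing $s(b) \to s$ requires an a priori bound showing $s(b)$ is bounded above and bounded away from zero before the entrywise approximation $B_b \approx B(s(b))$ can be promoted to a spectral statement. Heuristically this holds because the chain structure of $G(b)$ forces $\lambda(b) \to 1$ from above at the precise rate $\log\lambda(b) \sim b/s$, reflecting the growth $C_{ij} \sim w_{ij}/b$; making this rigorous is exactly the content of Lemma \ref{collapse}.
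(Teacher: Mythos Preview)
Your overall strategy---approximate by $G(b)$, invoke Lemma~\ref{paths}, then pass to the limit---is exactly the paper's. Your step~3 sandwich is a clean restatement of what the paper packages as Lemma~\ref{pigeon}: if $(1-\epsilon)W(x)\le W(x_b)\le(1+\epsilon)W(x)$ for all $x$, then the two sorted sequences satisfy $(1-\epsilon)p_r\le q_r^{(b)}\le(1+\epsilon)p_r$, and hence the $\limsup/\liminf$ of $p_r/\log r$ are trapped near $s(b)$. So far so good.

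The difference is in step~2, and there your acknowledged gap is real while your proposed fix is not quite right. Lemma~\ref{collapse} only gives $\rho(B_b)=1$; it does not by itself yield the a~priori bound on $s(b)$ you need before you can promote the entrywise estimate $B_b\approx B(s(b))$ to a spectral one. The paper sidesteps this entirely: it applies the step-3 sandwich not between $G$ and $G(b)$ but between $G(\alpha)$ and $G(\beta)$ for two small bases, obtaining $|s_\alpha-s_\beta|\le s_\alpha\delta$ with $\delta\to 0$, hence $\log s_b$ is Cauchy and $s_b\to s$ exists. Only \emph{after} knowing the limit exists does the paper use Lemma~\ref{collapse} to identify it. Your own counting argument, applied to a pair of approximate graphs, would give you this Cauchy property immediately.

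If you prefer to keep your direct spectral route, the gap can be closed as follows. Write $t(b)=1/s(b)$ and $f(t)=\rho\big((a_{ij}e^{-w_{ij}t})\big)$. From $C_{ij}b\in(w_{ij}-b,\,w_{ij}]$ and $\rho(B_b)=1$ one gets the two-sided bound $e^{-bt(b)}\le f(t(b))\le 1$, since the matrix with entries $a_{ij}e^{-(w_{ij}-b)t}$ equals $e^{bt}$ times the matrix defining $f(t)$. The crude estimate $f(t)\le n\,e^{-w_{\min}t}$ then forces $t(b)\le \log n/(w_{\min}-b)$, which is the missing a~priori bound; combined with $f(t(b))\ge e^{-bt(b)}\to 1$ and strict monotonicity of $f$, you get $t(b)\to t^*$ with $f(t^*)=1$, i.e.\ $s(b)\to s$.
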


In other words, the limit $s$ can be understood as an exponential decay constant along edges such that the resulting matrix is stochastic.
\begin{remark}
If $P = (p_{ij})$ is a stochastic matrix then $\rho(P) = 1$.  Using $a_{ij} = 1$ if $p_{ij}>0$ and $w_{ij} = -\log p_{ij}$ gives $s = 1$ in this case.  Plotting $(p_r)$ versus $r$, with $(p_r)$ the \emph{probabilities} of paths in decreasing order gives a graph asymptotic to a straight line of slope $-1$, on a log-log plot.  A sub-stochastic matrix, i.e., $\rho(P)<1$ will have $s<-1$, gives a graph asymptotic to a line of slope $<-1$.
\end{remark}

Although, as shown in Lemma \ref{uniform}, the weights of paths on $G(b)$ are close to the weights of paths on $G$, in the sequence of weights they may show up in the wrong order, i.e., for paths $x,y$ we may have $W(x)<W(y)$ but $W(x_b)>W(y_b)$.  Nevertheless, the asymptotics are related, as shown in the following.
\begin{lemma}\label{pigeon}
Let $s \in  \reals ^+$ and let $(f_n)$ be a non-decreasing positive sequence with $f_n/\log n \rightarrow s$ as $n \rightarrow \infty$.  For $\epsilon \in (0,1)$, let $(h_n)$ be a positive sequence that satisfies $|h_n/f_n-1|<\epsilon$ uniformly in $n$.  Let $\sigma:\mathbb{N}\rightarrow\mathbb{N}$ be a permutation of $\mathbb{N}$ chosen so that the sequence $(g_n)$ defined by
\begin{equation*}
g_n = h_{\sigma(n)}
\end{equation*}
is non-decreasing.  Then, $\limsup |g_n/\log n-s|\leq s\epsilon$.
\end{lemma}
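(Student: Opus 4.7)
The plan is to sandwich $g_n/\log n$ between $(1-\epsilon)s$ and $(1+\epsilon)s$ in the limit, by bounding $g_n$ from above by a constant multiple of $f_n$ and from below by a suitable logarithmic expression.

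For the upper bound I would exploit the monotonicity of $(f_n)$: since $h_k<(1+\epsilon)f_k\leq (1+\epsilon)f_n$ for every $k\leq n$, at least $n$ members of the sequence $(h_k)$ lie at or below $(1+\epsilon)f_n$, which forces $g_n\leq (1+\epsilon)f_n$. Dividing by $\log n$ and applying $f_n/\log n\to s$ gives $\limsup_n g_n/\log n\leq (1+\epsilon)s$.

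For the lower bound I would count. Setting $N(V)=|\{k:h_k\leq V\}|$, the inequality $g_n>V$ holds whenever $N(V)<n$. Since $h_k>(1-\epsilon)f_k$, any index $k$ counted by $N(V)$ satisfies $f_k<V/(1-\epsilon)$. Fixing $\delta>0$ and choosing $K$ with $f_k>(1-\delta)s\log k$ for $k>K$ (possible because $f_n/\log n\to s$), the count splits as $N(V)\leq K+\exp\bigl(V/[(1-\epsilon)(1-\delta)s]\bigr)$. Taking $V_n=(1-\epsilon)(1-\delta)s\log(n-K-1)$ then yields $N(V_n)\leq n-1$, hence $g_n\geq V_n$, and therefore $\liminf_n g_n/\log n\geq (1-\epsilon)(1-\delta)s$. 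Letting $\delta\to 0$ gives $\liminf_n g_n/\log n\geq (1-\epsilon)s$.

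Combining the two bounds, every subsequential limit of $g_n/\log n$ lies in $[(1-\epsilon)s,(1+\epsilon)s]$, which immediately yields $\limsup_n |g_n/\log n-s|\leq \epsilon s$. The only technical point I anticipate is calibrating the threshold $V_n$ precisely enough that the counting bound $N(V_n)<n$ really holds for all sufficiently large $n$; the remainder is bookkeeping with the definitions of $\limsup$ and $\liminf$.
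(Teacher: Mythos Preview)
Your argument is correct and is essentially the same counting/pigeonhole idea as the paper's proof: both show that too few (resp.\ too many) of the $h_k$ can lie below a given threshold, forcing $g_n$ into the interval $[(1-\epsilon)s\log n,(1+\epsilon)s\log n]$ asymptotically. The only real difference is presentational: the paper phrases both directions as contradictions via $\sigma$ and $\sigma^{-1}$ (needing an auxiliary cutoff $N'$ to absorb the first few $h_n$), whereas your upper bound exploits the monotonicity of $(f_n)$ directly to get $g_n\le(1+\epsilon)f_n$ for every $n$, which is a slightly cleaner way to reach the same conclusion.
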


\begin{proof}
Let $(\epsilon_n)$ be a sequence with $|\epsilon_n|<\epsilon$ uniformly in $n$ and such that
\begin{equation*}
h_n = (1+\epsilon_n)f_n
\end{equation*}
for each $n$.  Let $\delta>0$, and let $\Delta=\delta+(s+\delta)\epsilon$.  There exists $N\in\mathbb{N}$ so that for $n\geq N$, $|f_n/\log n-s|<\delta$, and this gives $|h_n/\log n - s|<\Delta$.   Suppose for some $j\geq N$ that $g_j/\log j < s - \Delta$.  Then for $m\leq j$ we must have $\sigma(m) < j$.  But then, $\sigma$ maps $\{1,...,j\}$ injectively into $\{1,...,j-1\}$, which is impossible.\\
Now, take $N'\geq N$ big enough so that $(s+\Delta)\log N'>\max_{n<N}h_n$ and suppose for some $j\geq N'$ that $g_j/\log j> s+\Delta$.  Let $\tau$ denote the inverse of $\sigma$, so that $g_{\tau(n)} = h_n$ for all $n$.  Then for $m\leq j$ we must have $\tau(j)<j$.  But then, $\tau$ maps $\{1,...,j\}$ injectively onto $\{1,...,j-1\}$, which is again impossible.  Since $\delta$ is arbitrary, and $\Delta\rightarrow s\epsilon$ as $\delta \rightarrow 0$, it follows that $\limsup |g_n/\log n-s|\leq s\epsilon $.
\end{proof}

\noindent We now prove Theorem \ref{irredGraph}.
\begin{proof}
Using Lemma \ref{uniform}, for $0<\epsilon<1$ there exists $b>0$ s.t. $0<\alpha,\beta<b$ implies
\begin{equation*}
\frac{1-\epsilon}{1+\epsilon} \leq \frac{W(x_{\alpha})}{W(x_{\beta})} \leq \frac{1+\epsilon}{1-\epsilon}
\end{equation*}
for corresponding paths $x_{\alpha}$ and $x_{\beta}$.  Let $(p_r)$ denote the sequence of weights for $W(x_{\alpha})$, and let $s_{\alpha}$ denote $\lim_{r\rightarrow\infty} p_r/\log r$.  Define $s_{\beta}$ in the same way, and let $\delta = \max\{|1-\frac{1-\epsilon}{1+\epsilon}|,|1-\frac{1+\epsilon}{1-\epsilon}|\}$.  Applying Lemma \ref{pigeon} gives
\begin{equation*}
|s_{\alpha}-s_{\beta}|\leq s_{\alpha}\delta
\end{equation*}
If $s_{\alpha} = 0$ then $s_{\beta}=0$.  Otherwise, $1-\delta \leq s_{\beta}/s_{\alpha}\leq 1+\delta$.  Taking logs gives
\begin{equation*}
\log(1-\delta) \leq \log(s_{\alpha}) - \log(s_{\beta}) \leq \log (1+\delta)
\end{equation*}
As $b\rightarrow 0$, $\epsilon$ can be chosen so that $\epsilon\rightarrow 0$, and so $\delta\rightarrow 0$.  Therefore $(\log(s_b))$ is Cauchy and therefore converges, as $b\rightarrow 0$.  By continuity of the exponential function, $\lim_{b\to 0 }s_b$ exists; denote the limit by $s$.  For $\epsilon>0$ let $\alpha$ satisfy $|s_{\alpha}-s|<\epsilon$ and using Lemma \ref{uniform}, let it satisfy also $|W(x_{\alpha})/W(x)-1|<\epsilon$ uniformly for $x$ on $G$ and $x_{\alpha}$ on $G(\alpha)$.  Using Lemma \ref{pigeon},
\begin{equation*}
\limsup |p_r/\log r - s| \leq \limsup |p_r/\log r - s_{\alpha}| + |s_{\alpha} - s| < \epsilon (s_{\alpha} + 1)
\end{equation*}
Since $\epsilon$ is arbitrary $\lim_{r \to\infty} p_r/\log r$ exists and is equal to $s=\lim_{b\to 0}s_b$.  To obtain $s$, for $b$ admissible let $A(b)$ denote the adjacency matrix of $G(b)$, and let $\lambda_b = \rho(A(b))$.  Let $B(b)$ denote the matrix with entries $a_{ij}\lambda_b^{-C_{ij}}$, then by Lemma \ref{collapse} it follows that $\rho(B(b))=1$ for each $b$.  Using Lemma \ref{paths}, $s_b = b/\log \lambda_b$ for each $b$, so that $\lambda_b^{-C_{ij}} = e^{-s_b^{-1}C_{ij}b}$.  But $C_{ij}b \rightarrow w_{ij}$ and $s_b \rightarrow s$ as $b \rightarrow 0$, therefore $s$ is given by the condition $\rho(B)=1$, where $B$ is the matrix with entries $a_{ij}e^{-s^{-1}w_{ij}}$.
\end{proof}

\section{Composition, Union of Sets of Paths}
In this section we describe the effect of union and composition, as defined in Section 2, on the asymptotic behaviour of sequences of weights.
\begin{lemma}\label{comp}
For $1 \leq i \leq k$ let $(p_{r}^{(i)})$ be a non-decreasing positive sequence, and let $(c_r)$ be the composition of the $(p_{r}^{(i)})$.  Then,
\begin{remunerate}
\item if for each $i$, $\lim_{r\to\infty} p_r^{(i)}/r \in \reals ^+$ then
\begin{equation*}
\lim_{r\to\infty}c_r^k/r = k!\prod_{i=1}^k s_i
\end{equation*}
where $s_i = \lim_{r\to\infty}p_{r}^{(i)}/r$ for $1 \leq i\leq k$, and
\item if $\lim_{r\to\infty}p_{r}^{(i)}/\log r >0$ for all $i$ and is $<\infty$ for some $i$ then
\begin{equation*}
\lim_{r\to\infty}c_r/\log r = \min s_i
\end{equation*}
where $s_i = \lim_{r\to\infty}p_{r}^{(i)}/\log r$ for $1 \leq i \leq k$.
\end{remunerate}
\end{lemma}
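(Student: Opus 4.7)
My plan is to recast both parts via the counting function $M(t)=|\{r:c_r\le t\}|$.  Since the composition consists of all sums $\sum_i p_{j_i}^{(i)}$,
\begin{equation*}
M(t) = \bigl|\{(j_1,\dots,j_k)\in\mathbb{N}^k : p_{j_1}^{(1)}+\cdots+p_{j_k}^{(k)}\le t\}\bigr|,
\end{equation*}
and the monotonicity of $(c_r)$ gives $c_r=\inf\{t:M(t)\ge r\}$.  Hence Part~1 will follow from $M(t)\sim t^k/(k!\prod_i s_i)$ (inverted: $c_r^k/r\to k!\prod_i s_i$), and Part~2 from $\log M(t)/t\to 1/s_{\min}$ with $s_{\min}=\min_i s_i$ (inverted: $c_r/\log r\to s_{\min}$).

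For Part~1, the hypothesis $p_r^{(i)}/r\to s_i$ gives $N_i(t):=|\{r:p_r^{(i)}\le t\}|\sim t/s_i$, so $M(t)$ essentially counts lattice points in the simplex $\{x\in\mathbb{R}_+^k:\sum_i s_i x_i\le t\}$, whose volume is $t^k/(k!\prod_i s_i)$.  I would make this precise by fixing $\epsilon>0$, choosing $R$ so that $(s_i-\epsilon)j\le p_j^{(i)}\le(s_i+\epsilon)j$ for $j\ge R$, and sandwiching $M(t)$ between the lattice-point counts for the dilated/contracted simplices $\sum_i(s_i\pm\epsilon)j_i\le t$.  Tuples with some $j_i<R$ contribute $O(t^{k-1})$, and a standard Riemann-sum bound converts simplex volume to lattice count; letting $\epsilon\to 0$ gives the claim.

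For Part~2 I would induct on $k$, using
\begin{equation*}
M(t) = \sum_{j_1:\,p_{j_1}^{(1)}\le t} M^{(2..k)}\!\bigl(t-p_{j_1}^{(1)}\bigr),
\end{equation*}
where $M^{(2..k)}$ is the count for the composition of the other $k-1$ sequences.  When $s_1<\infty$, the hypothesis yields $N_1$ growing ``like'' $e^{u/s_1}$, so $j_1\mapsto p_{j_1}^{(1)}$ has density of order $e^{u/s_1}/s_1$; by the inductive hypothesis $M^{(2..k)}(u)\approx e^{u/s'}$ with $s'=\min_{i\ge 2}s_i$.  Substituting converts the sum, up to sub-exponential factors, into $\int_0^t e^{u/s_1}e^{(t-u)/s'}\,du$.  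Whether $s_1<s'$, $s_1=s'$, or $s_1>s'$, the integral's logarithm divided by $t$ tends to $1/\min(s_1,s')=1/s_{\min}$.  The matching lower bound is immediate: fixing $j_i=1$ for $i\ne i^*$ (where $i^*$ attains the minimum) gives $M(t)\ge N_{i^*}(t-C)$, which grows like $e^{t/s_{\min}}$.

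The main obstacle will be making the Part~2 approximations rigorous uniformly in $t$.  Two degenerate cases require care: when several $s_i$ coincide with $s_{\min}$, polynomial-in-$t$ factors appear in $M(t)$, and when some $s_i=+\infty$ the corresponding ``density'' grows slower than any exponential.  Both issues disappear after taking $\log/t$, but verifying cleanly that such coordinates contribute only sub-exponentially and cannot shift the exponential rate is the only step beyond routine bookkeeping.
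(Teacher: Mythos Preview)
Your proposal is correct, and the overall framework---working with the counting function $M(t)$ (the paper calls it $r(c)$) and then inverting---is exactly what the paper does. The difference lies in how the asymptotics of $M(t)$ are obtained.

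For Part~1 you take a genuinely different route. The paper argues by induction on $k$: assuming the composition $(q_r)$ of the first $d$ sequences satisfies $q_r^d/r\to d!\prod_{i\le d}s_i$, it composes with $(p_r^{(d+1)})$ and estimates $r(c)=\sum_j I(j,c)$, where $I(j,c)$ counts indices $i$ with $p_i^{(d+1)}\le c-q_j$; a page of careful bookkeeping yields $r(c)\sim c^{d+1}/((d+1)!\prod_{i\le d+1}s_i)$. Your approach instead treats all $k$ coordinates at once, recognizing $M(t)$ as a lattice-point count in the simplex $\{\sum_i s_i x_i\le t\}$ and invoking its volume $t^k/(k!\prod s_i)$. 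This is cleaner and more geometric---the factorial appears immediately from the simplex volume rather than accumulating one factor per inductive step---at the cost of needing a multivariate lattice-point-versus-volume estimate rather than a one-dimensional Riemann sum. Both are standard; yours is shorter.

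For Part~2 your approach is essentially the paper's. The paper also reduces to the two-sequence case (composition of $(q_r)$, already handled, with a new $(p_r)$), writes $r(c)=\sum_j I(j,c)$, and bounds the sum by comparing $I(j,c)$ to $e^{(c-q_j)/s_p}$; your recursion $M(t)=\sum_{j_1}M^{(2..k)}(t-p_{j_1}^{(1)})$ and integral $\int_0^t e^{u/s_1}e^{(t-u)/s'}\,du$ are the same computation in different notation. The paper's written argument assumes the strict inequality $s_p<s_q$ (it needs $\epsilon<s_p^{-1}s_q-1$), so your explicit acknowledgment that the equal-rate case produces an extra polynomial factor, harmless after taking $\log/t$, is a point in your favor. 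Your identification of the $s_i=+\infty$ case as needing separate (but easy) treatment is likewise accurate.
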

\begin{proof}
First we prove part 1.  For $1 \leq d < k$ let $(q_r)$ denote the composition of $(p_{r}^{(i)})$, $i=1,...,d$, and suppose that $s_q = \lim_{r\to\infty}q_r^d/r$ exists and $= d!\prod_{i=1}^d s_i$.  Let $(p_r)$ denote $(p_r^{(d+1)})$ and $s_p$ denote $s_{d+1}$.  Then there exist positive functions $\delta:\mathbb{N}\rightarrow\reals^+$ and $\gamma:\mathbb{N}\rightarrow\reals^+$ which are non-increasing and satisfy
\begin{equation}\label{dg1}
|i/p_i - s_p^{-1}|<\delta(i),\, |q_j/j^{1/d} - s_q^{1/d}|<\gamma(j)
\end{equation}
and $\delta(i),\gamma(j) \rightarrow 0$ as $i,j \rightarrow \infty$.\\

Let $(c_r)$ be the composition of $(p_r^{(i)})$, $i=1,...,d+1$, equivalently, the composition of $(p_r)$ and $(q_r)$.  For $c \in\reals^+$ define $r(c)$ to be the number of entries in $(c_r)$ which are $\leq c$, i.e.,
\begin{equation*}
r(c) = \#\{(i,j): p_i+q_j \leq c\}
\end{equation*}
We will estimate $r(c)$ in the limit of large $c$.  For $f,g\neq 0$, the following notation is used in what follows:
\begin{romannum}
\item $f(c)=O(g(c)) \Leftrightarrow \limsup_{c\to\infty}|f(c)/g(c)| < \infty$
\item $f(c)=o(g(c)) \Leftrightarrow \lim_{c\to\infty}f(c)/g(c)=0$
\item $f(c)\sim g(c) \Leftrightarrow \lim_{c\to\infty}f(c)/g(c)=1$
\end{romannum}
Observe that $r(c) = \sum_{j=1}^{J(c)}I(j,c)$ where $p_1+q_{J(c)} \leq c < p_1+q_{J(c)+1}$ and $p_{I(j,c)} \leq c - q_j < p_{I(j,c)+1}$.  Since $(p_r)$ and $(q_r)$ are non-decreasing, it follows that $J(c)$ is non-decreasing, and that $I(j,c)$ is non-increasing in $j$, and non-decreasing in $c$.  From $\eqref{dg1}$, $J(c)\sim s_q^{-1} c^d$, and for each $j$, $I(j,c)\sim s_p^{-1}c$.\\

For each $n$, $I(j,c)\geq n$ for most $j$, if $c$ is large enough.  More precisely, let $J_n(c) = \max \{j:I(j,c)\geq n\}$.  Since $I(j,c)$ is non-increasing in $j$, $I(j,c)<n$ if and only if $j>J_n(c)$.  Let $j = J_n(c)+1$ and let $J$ denote $J(c)$, then $I(j,c)<n$ and so $p_n + q_j \geq c$.  Since $c \geq p_1 + q_J$ it follows that $q_J-q_j \leq p_n - p_1$.  Let $\sigma = s_q^{1/d}$, then from \eqref{dg1} and since $\gamma(j)$ is non-increasing,
\begin{eqnarray*}
q_J - q_j &\geq& J^{1/d}(\sigma-\gamma(J)) - j^{1/d}(\sigma+\gamma(j))\\
&\geq& (J^{1/d}-j^{1/d})\sigma-2\gamma(j)J^{1/d}
\end{eqnarray*}
Since the function $f(x)=x^{1/d}$ is concave, $J^{1/d}-j^{1/d} \geq (J-j)\frac{d}{dx}(x^{1/d})\big{|}_{x=J} = \frac{1}{d}(J-j)J^{(1/d)-1}$, therefore
\begin{eqnarray*}
J-j &\leq& (d/\sigma)\cdot((q_J-q_j)J^{1-(1/d)} + 2\gamma(j)J)\\
&\leq& (d/\sigma)\cdot((p_n-p_1)J^{1-(1/d)}+2\gamma(j)J)\\
&=& C_1J^{1-(1/d)} + C_2\gamma(j)J
\end{eqnarray*}
where $C_1 = (d/\sigma)\cdot(p_n-p_1)$ and $C_2 = 2d/\sigma$.  Since $q_j \geq q_J-(p_n-p_1)$, $j\rightarrow\infty$ as $c\rightarrow\infty$, so that $\gamma(j)\rightarrow 0$, which implies that $(J(c)-J_n(c))/J(c) \rightarrow 0$ as $c\rightarrow\infty$, i.e., $J(c)-J_n(c) = o(J(c))$, justifying the statement ``$I(j,c) \geq n$ for most $j$, if $c$ is large enough''.\\

Define $s= \min\{s_p^{-1},s_q^{1/d},1\}$ and for $\epsilon>0$, $\epsilon<s$ let $N,M \in \mathbb{N}$ be such that $i\geq N$ implies $\delta(i) < \epsilon $ and $j\geq M$ implies $\gamma(j) < \epsilon$, and note that $J_N(c)> M$ for $c$ large enough.  Since $I(j,c) = O(c)$ for each $j$, it follows that $\sum_{j=1}^M I(j,c) = O(c)$.  Since $I(j,c)<N$ when $j>J_N(c)$ and $J(c)-J_N(c) = o(J(c))$, it follows that $\sum_{j=J_N(c)+1}^{J(c)}I(j,c) = o(J(c))$.  Since $O(c) = o(c^{d+1})$ and $o(J(c)) = o(c^{d+1})$,
\begin{equation}\label{rsum0}
r(c) = \sum_{j=M+1}^{J_k(c)} I(j,c) + o(c^{d+1})
\end{equation}
Using \eqref{dg1}, $I(j,c)$ satisfies
\begin{equation*}
(s_p^{-1}-\delta(I(j,c)+1))p_{I(j,c)+1}-1 < I(j,c) < (s_p^{-1} + \delta(I(j,c)))p_{I(j,c)}
\end{equation*}
Since $\delta$ is non-increasing, $\delta(I(j,c)+1)$ can be replaced with $\delta(I(j,c))$ on the left-hand side.  Using $p_{I(j,c)} \leq c - q_j < p_{I(j,c)+1}$ and \eqref{dg1} again gives
\begin{equation*}
(s_p^{-1}-\delta)(c-(s_q^{1/d}+\gamma)j^{1/d})-1 < I < (s_p^{-1} + \delta)(c-(s_q^{1/d}-\gamma)j^{1/d})
\end{equation*}
or
\begin{equation}\label{Ibnd}
|I- (c/s_p - s_q^{1/d}s_p^{-1}j^{1/d})| \leq 1 + \delta c + (\delta s_q^{1/d} + \gamma s_p^{-1} + \delta\gamma)j^{1/d}
\end{equation}
where the arguments to $\delta$, $\gamma$ and $I$ have been suppressed.  Using $\delta<\epsilon<1$ and $\gamma<\epsilon<1$ gives
\begin{equation}\label{Ebnd}
\sum_{j=M+1}^{J_N(c)}\delta c + (\delta s_q^{1/d} +\gamma s_p^{-1} +\delta\gamma)j^{1/d} \leq \epsilon J(c) \left [c + (s_q^{1/d}+s_p^{-1}+1)J(c)^{1/d}\right ]
\end{equation}
Let $E(c) = J(c)\cdot(c + (s_q^{1/d}+s_p^{-1}+1)J(c)^{1/d})$, then since $J(c) \sim s_q^{-1}c^d$, $E(c) \sim Kc^{d+1}$ where $K = s_q^{-1}(1 + (s_q^{1/d}+s_p^{-1}+1)s_q^{-1/d})$ is a constant.  With this observation, and using \eqref{Ibnd} and \eqref{Ebnd},
\begin{equation}\label{Ibnd2}
\sum_{j=M+1}^{J_N(c)}|I- (c/s_p - s_q^{1/d}s_p^{-1}j^{1/d})| \leq J(c) + \epsilon Kc^{d+1} + o(c^{d+1})
\end{equation}
We now estimate the term on the left-hand side of \eqref{Ibnd2}.  Using $J_N(c) = J(c)\cdot(1+o(1))$, $\sum_{j=M+1}^{J_N(c)}j^{1/d} = \frac{d}{d+1}J_N(c)^{1+(1/d)} + O(1)$ and $J(c) = c^d/s_q + o(c^d)$ gives
\begin{eqnarray}\label{LHbnd}\nonumber
\sum_{j=M+1}^{J_N(c)} (c/s_p - s_q^{1/d}s_p^{-1}j^{1/d}) &=& \frac{c^{d+1}}{s_qs_p} - \frac{d}{d+1}\frac{s_q^{1/d}}{s_p}s_q^{-(1+(1/d))}c^{d+1} + o(c^{d+1})\\ \nonumber
&=& \frac{c^{d+1}}{s_qs_p}(1-\frac{d}{d+1}) + o(c^{d+1})\\
&=& \frac{c^{d+1}}{(d+1)s_qs_p} + o(c^{d+1})
\end{eqnarray}
Using \eqref{rsum0}, \eqref{Ibnd2}, \eqref{LHbnd} and $J(c) = o(c^{d+1})$,
\begin{equation*}
|r(c)-\frac{c^{d+1}}{(d+1)s_qs_p}| = \epsilon Kc^{d+1} + o(c^{d+1})
\end{equation*}
or
\begin{equation*}
\limsup_{c\to\infty}\left|\frac{r(c)}{c^{d+1}} - \frac{1}{(d+1)s_qs_p}\right| \leq \epsilon K
\end{equation*}
Since $\epsilon>0$ is arbitrary it follows that $\lim_{c\to\infty} r(c)/c^{d+1} = 1/((d+1)s_qs_p)$.  Substituting and inverting,
\begin{equation*}
\lim_{r\to\infty} \frac{c^{d+1}}{r} = (d+1)!\prod_{i=1}^{d+1}s_i
\end{equation*}
For a set $(p_{r}^{(i)})$ of sequences, $i=1,...,k$, applying this rule $k-1$ times gives part 1.\\

We now prove part 2.  For sequences $(p_r)$ and $(q_r)$ consider now $s_p = \lim_{r\to\infty}p_r/\log r$ and $s_q = \lim_{r\to\infty}q_r/\log r$, and suppose without loss of generality that $s_p < s_q$.  Let $\delta(i)$ and $\gamma(j)$ be non-increasing positive functions of the indices $i$ and $j$ that satisfy
\begin{equation}\label{dg2}
|\log i/p_i - s_p^{-1}|<\delta(i),\, |q_j/\log j - s_q|<\gamma(j),\, |\log j/q_j - s_q^{-1}|<\gamma(j)
\end{equation}
and $\delta(i)\rightarrow 0$ as $i\rightarrow\infty$, $\gamma(j) \rightarrow 0$ as $j \rightarrow \infty$.\\

Let $(c_r)$ be the composition of $(p_i)$ and $(q_j)$ and let $r(c)$, $I(j,c)$, $J(c)$ and $J_k(c)$ be defined as before.  As before, $J(c)$ is non-decreasing, and $I(j,c)$ is non-increasing in $j$, and non-decreasing in $c$.  From $\eqref{dg2}$, $J(c)\sim e^{s_q^{-1}c}$, and for each $j$, $I(j,c)\sim e^{s_p^{-1}c}$.  For $0<\epsilon<\min \{s_p^{-1},s_q\}$ take $N,M \in \mathbb{N}$ so that $i \geq N$ and $j\geq M$ implies $\delta(i)<\epsilon$, $\gamma(j)<\epsilon$, and
\begin{equation}\label{bnd1}
\delta(i) s_q + \gamma(j) (s_p^{-1} + \delta(i)) < \epsilon
\end{equation}
Since for each $j$, $I(j,c)\rightarrow \infty$ as $c\rightarrow\infty$, let $c$ be large enough that $J_N(c)> M$.  Then
\begin{equation}\label{rsum1}
r(c) = \sum_{j=1}^M I(j,c) + \sum_{j=M+1}^{J_N(c)} I(j,c) + \sum_{j = J_N(c)+1}^{J(c)} I(j,c)
\end{equation}
If $j\leq J_N(c)$ then $I(j,c)\geq N$ and so $\delta(I(j,c)) < \epsilon$, so that $I(j,c) \leq e^{(s_p^{-1}+\epsilon)c}$, using \eqref{dg2} and the fact that $p_{I(j,c)}\leq c$.  Therefore, the first sum is $\leq Me^{(s_p^{-1}+\epsilon)c}$.  Since $I(j,c)<N$ for $j>J_N(c)$, the third sum is $\leq NJ(c)$, which is $\leq Ne^{(s_q^{-1}+\epsilon)c}$, using \eqref{dg2} and the fact that $J(c)>M$, and $\gamma(j)<\epsilon$ when $j\geq M$.  For $M+1\leq j \leq J_N(c)$, $I(j,c)$ satisfies
\begin{equation*}
\exp((s_p^{-1}-\delta)(c-(s_q+\gamma)\log j)-1) < I < \exp((s_p^{-1} + \delta)(c-(s_q-\gamma)\log j))
\end{equation*}
where $\delta = \delta(I(j,c))$ and $\gamma = \gamma(j)$.  Using $\delta<\epsilon$, $\gamma<\epsilon$, and \eqref{bnd1} gives
\begin{equation*}
\exp((s_p^{-1}-\epsilon)c - 1)j^{- s_p^{-1}s_q - \epsilon} < I < \exp(s_p^{-1}c + \epsilon c)j^{- s_p^{-1}s_q + \epsilon}
\end{equation*}
If $\epsilon<s_p^{-1}s_q-1$, which is true for $\epsilon$ small enough, then $\sum_{j=0}^{\infty} j^{-s_p^{-1}s_q+\epsilon}<\infty$.  Then, from \eqref{rsum1} and from the above observations,
\begin{equation*}
e^{(s_p^{-1}-\epsilon)c - 1}(M+1)^{- s_p^{-1}s_q - \epsilon} \leq r(c) \leq Me^{(s_p^{-1}+\epsilon)c} +  Ce^{(s_p^{-1} + \epsilon)c} + Ne^{(s_q^{-1}+\epsilon)c}
\end{equation*}
where $C =  \sum_{j=0}^{\infty} j^{-s_p^{-1} s_q+\epsilon}$, and the lower bound is obtained by taking the $j=M+1$ term of the second sum in \eqref{rsum1}.  Using $s_q^{-1}<s_p^{-1}$, it follows that
\begin{equation*}
\limsup_{c\to\infty}|\log r(c)/c-s_p^{-1}| \leq \epsilon
\end{equation*}
Since $\epsilon>0$ is arbitrary, $\lim_{c\to\infty}\log r(c)/c = s_p^{-1}$, or inverting,
\begin{equation*}
\lim_{r\to\infty}c_r/\log r = s_p
\end{equation*}
For $1\leq d < k$, if $(q_r)$ is the composition of $(p_r^{(i)})$, $i=1,...,d$ and $(p_r) = (p_r^{(d+1)})$, then applying the rule $k-1$ times gives part 2.
\end{proof}

\begin{lemma}\label{union}
Let $(p_r)$ and $(q_r)$ be non-decreasing positive sequences such that either $\lim_{r\to\infty}p_r^k/r$ or $\lim_{r\to\infty}p_r/\log r$ is a positive real number, and similarly for $(q_r)$, and let $(c_r)$ be their union.
\begin{remunerate}
\item If for some integers $k_p$ and $k_q$ we have $s_p=\lim_{r\to\infty}p_r^{k_p}/r \in \reals ^+$ and $s_q=\lim_{r\to\infty}q_r^{k_q}/r \in \reals ^+$, then if $k_p=k_q=k$,
\begin{equation*}
\lim_{r\to\infty}c_r^k/r = (s_1^{-1}+s_2^{-1})^{-1}
\end{equation*}
and (without loss of generality) if $k_p>k_q$,
\begin{equation*}
\lim_{r\to\infty}c_r^{k_p}/r = s_p
\end{equation*}
\item Let $s_1=\lim_{r\to\infty}p_r/\log r$ and $s_2=\lim_{r\to\infty}q_r/\log r$, then if $s_1>0$, $s_2>0$, and at least one of $s_1$ or $s_2$ is finite,
\begin{equation*}
\lim_{r\to\infty}c_r/\log r = \min\{s_1,s_2\}
\end{equation*}
\end{remunerate}
\end{lemma}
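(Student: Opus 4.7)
The plan is to pass through the counting function, just as in the proof of Lemma \ref{comp}. For a non-decreasing positive sequence $(a_r)$ define $N_a(c) = \#\{r : a_r \leq c\}$. Since $(c_r)$ is the union of $(p_r)$ and $(q_r)$, the counts simply add: $N_c(c) = N_p(c) + N_q(c)$. The convergence hypotheses translate directly into asymptotics for $N_p$ and $N_q$: in the power-law regime, $p_r^{k_p}/r \to s_p$ is equivalent to $N_p(c) \sim c^{k_p}/s_p$ as $c \to \infty$, and similarly for $q$; in the logarithmic regime, $p_r/\log r \to s_1$ is equivalent to $N_p(c) \sim e^{c/s_1}$, and similarly for $q$ (with the convention that $s_2 = \infty$ gives $N_q(c) = e^{o(c)}$).

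For part 1, I would add the two asymptotic expressions for $N_p(c)$ and $N_q(c)$. If $k_p = k_q = k$ then $N_c(c) \sim c^k(s_p^{-1}+s_q^{-1})$, and inverting this relation (using that $c_r = c$ when $r = N_c(c)$, up to negligible error from ties and integer parts) gives $c_r^k/r \to (s_p^{-1}+s_q^{-1})^{-1}$. If instead $k_p > k_q$, then $c^{k_p}/s_p$ dominates $c^{k_q}/s_q$ and $N_c(c) \sim c^{k_p}/s_p$, so $c_r^{k_p}/r \to s_p$. For part 2, assuming without loss of generality that $s_1 \leq s_2$, the exponential $e^{c/s_1}$ dominates $e^{c/s_2}$ (strictly if $s_1 < s_2$, and they coincide if $s_1 = s_2$), so $\log N_c(c) \sim c/s_1$ and hence $c_r/\log r \to s_1 = \min\{s_1,s_2\}$; the case $s_2 = \infty$ is handled identically since then $N_q(c) = e^{o(c)}$ is negligible.

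The routine but slightly technical step is making the ``inversion'' rigorous: given $N_c(c)/f(c) \to L$ for a strictly increasing function $f$, one wants $c_r/f^{-1}(r) \to 1$, or the analogous relation after composing with the appropriate power or exponential. For this I would fix $\epsilon > 0$, choose $C$ large so that $(L-\epsilon)f(c) \leq N_c(c) \leq (L+\epsilon)f(c)$ for $c \geq C$, and then for $r$ large enough set $c = c_r$ and use the sandwich to bound $c_r$ between two inverse values; the monotonicity of $f$ (i.e., of $c^k$ or $e^{c/s}$) lets one translate this into the claimed limit for $c_r^k/r$ or $c_r/\log r$. I expect the main nuisance, rather than obstacle, to be the book-keeping between the ``count'' form and the ``sequence'' form of each hypothesis; once the equivalence $a_r^k/r \to s \Leftrightarrow N_a(c) \sim c^k/s$ (respectively $a_r/\log r \to s \Leftrightarrow \log N_a(c) \sim c/s$) is stated once and for all, both parts reduce to an almost purely arithmetic statement about leading terms of sums of asymptotic expressions.
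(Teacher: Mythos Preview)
Your proposal is correct and is essentially the same argument as the paper's. The paper defines, for a given $c_r$, indices $r_1,r_2$ with $p_{r_1}\le c_r<p_{r_1+1}$ and $q_{r_2}\le c_r<q_{r_2+1}$ and uses $r=r_1+r_2$; this is exactly your identity $N_c(c)=N_p(c)+N_q(c)$ evaluated at $c=c_r$, and the subsequent bounds on $r_1,r_2$ in terms of $c_r$ are precisely your asymptotics $N_p(c)\sim c^{k_p}/s_p$ (resp.\ $\log N_p(c)\sim c/s_1$) followed by the same inversion you describe. Your write-up packages the step ``hypothesis on $(a_r)\Leftrightarrow$ asymptotic for $N_a$'' a bit more cleanly, and it also covers the equal-exponent case $s_1=s_2$ in part~2 (which the paper's proof treats only implicitly via the strict inequality $s_p<s_q$), but the underlying mechanism is identical.
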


{\em Proof.}
Consider the first case.  For $\delta$ with $0<\delta<\min\{s_p^{-1},s_q^{-1}\}$, take $M$ so that for $r\geq M$, $|r/p_r^{k_1}-s_p^{-1}|<\delta$ and $|r/q_r^{k_2}-s_q^{-1}|<\delta$.  For some $c_r$ take $r_1$ and $r_2$ so that $p_{r_1}\leq c_r < p_{r_1+1}$ and $q_{r_2}\leq c_r < q_{r_2+1}$.  Then
\begin{equation*}
r = r_1+r_2
\end{equation*}
and if $c_r$ is large enough then $r_1\geq M$ and $r_2\geq M$.  Then,
\begin{equation*}
c_r^{k_1}(s_p^{-1}-\delta)-1 \leq p_{r_1+1}^{k_1}(s_p^{-1}-\delta)-1 < r_1 < p_{r_1}^{k_1}(s_p^{-1}+\delta) \leq c_r^{k_1}(s_p^{-1}+\delta)
\end{equation*}
and similarly for $r_2$, with $k_2$ and $s_q$ rather than $k_1$ and $s_p$.  Without loss of generality, if $k_1>k_2$ then
\begin{equation*}
\lim_{r\to\infty}c_r^{k_1}/r = s_p
\end{equation*}
and if $k_1=k_2=k$,
\begin{equation*}
\lim_{r\to\infty}c_r^k/r = (s_p^{-1}+s_q^{-1})^{-1}
\end{equation*}
Consider now the second case.  Let
\begin{eqnarray*}
s_p&=&\lim_{r\to\infty} p_r/\log r\\
s_q&=&\lim_{r\to\infty} q_r/\log r
\end{eqnarray*}
and suppose that $s_p$ is finite and that $s_p<s_q$; $s_q$ may be finite or infinite.  For $0<\delta<\min\{s_p^{-1},s_q^{-1}\}$, take $M$ so that for $r\geq M$, $|\log r/p_r - s_p^{-1}|<\delta$ and $|\log r/q_r-s_q^{-1}|<\delta$.  For some $c_r$ let $r_1$ and $r_2$ be defined as in the previous case.  Then
\begin{equation*}
r = r_1+r_2
\end{equation*}
and for $c_r$ large enough, $r_1\geq M$ and $r_2\geq M$.  In this case,
\begin{eqnarray*}
\exp(c_r(s_p^{-1}-\delta))+\exp(c_r(s_q^{-1}-\delta))-2<r< \exp(c_r(s_p^{-1}+\delta)) + \exp(c_r(s_q^{-1}+\delta))\\
\exp(c_r(s_p^{-1}-\delta))(1+\exp(c_r(s_q^{-1}-s_p^{-1})))-2<r< \exp(c_r(s_p^{-1}+\delta))(1 + \exp(c_r(s_q^{-1}-s_p^{-1})))
\end{eqnarray*}
Since $s_p<s_q$ it follows that $s_q^{-1}-s_p^{-1}<0$.  Taking logs, dividing by $c_r$, taking the limit and inverting gives
\begin{equation*}
\lim_{r\to\infty}c_r/\log r = s_p \cvd
\end{equation*}

The result of Theorem \ref{main} now follows from the discussion in Section \ref{secdecomp}, from Lemma \ref{cycle}, from Theorem \ref{irredGraph}, and from repeated application of the rules for composition and union of sequences given by Lemmas \ref{comp} and \ref{union}.

\section{Discussion}
In this paper we have studied the asymptotic scaling of path weights in directed edge-weighted graphs.  Given a starting vertex and an ending vertex, and letting $p_r$ be the weight of the path with $r^{th}$-smallest total weight between starting and ending vertices, we showed that three outcomes are possible: (1) there are finitely many possible paths from start to end, (2) there are infinitely many possible paths and $p_r^c/r \rightarrow s$ for some $c$ and $s$, or (3) there are infinitely many possible paths and $p_r/\log r \rightarrow s$ for some $s$.  Case 1 occurs if and only if the are no strongly connected components reachable from the start vertex and from which the end vertex can be reached.  Case 2 occurs if and only if there is one or more such connected components, but they are all cycles.  Case 3 occurs if and only if at least one of those connected components is not a cycle.  Thus, we can discern the order of the $r$ vs. $p_r$ relationship based on the structure and type of the graph's connected components, and is readily done by standard graph theoretic algorithms.  In cases 2 and 3, determining the constant $s$, and $c$ if relevant, requires analyzing the edge weights.  Again, however, this can be done by well known means, as described above.  We thus have a complete characterization of the asymptotic scaling of path weights for any finite directed graph with positive edge weights.\\

It should be noted that the characterization given in the main result, Theorem \ref{main}, readily applies to Markov chains for which the transition probabilities are positive and $<1$, just by taking the negative $\log$ of the probability.  This gives a graph for which the edge weights are positive.  Moreover, summation of edge weights is equivalent to multiplication of their probabilities, i.e., the sum of positive weights of edges is the negative $\log$ of the product of the transition probabilities along those edges.  Therefore, the results of this paper apply to Markov chains, just by making this transformation.\\

Our initial motivation for studying this problem arose from simulation studies of ``complex" behaviour in randomly-generated continuous-time switching networks.  Dividing the state space of such models into orthants based on the sign of each state variable, we observed that the empirical probabilities of different qualitative return paths to a given orthant were roughly powerlaw distributed (see \cite{Glass05} for some of this work, though the powerlaw relationship in particular was not included in that paper).  We found that a Markov chain model of the transitions between orthants reproduced a similar powerlaw distribution of return paths.  At the time we knew of no theoretical basis for why this should be.  It turns out that that Mandelbrot provided a partial explanation over 50 years ago, while working in the area of coding theory \cite{mandelbrot55}.  Our current result confirms and generalizes Mandelbrot's results.  In the case that the edge weights are the negative logarithms of the transition probability of a Markov chain, then the path weight $p_r$ is the negative log probability of the path, or $-\log Pr(x_r)$, where $x_r$ is the $r^{th}$ most probable path.  If the chain is of the third type described above, then $p_r/\log r  = -\log Pr(x_r)/\log r \rightarrow s$, or $\log Pr(x_r) \approx -s \log r = \log r^{-s}$, so that $Pr(x_r) \approx r^{-s}$.  That is, we have a powerlaw or Zipfian relationship between the path probabilities and the path ranks.  Our work improves on Mandelbrot's result in several ways.  First, it identifies precisely which Markov chains do produce a powerlaw relationship (the case 3 chains) and which do not.  Second, it provides a characterization of the scaling behaviour for the chains that do not generate a powerlaw relationship.  Third, it gives us a means to calculate the exact rate of the scaling ($s$, and possibly $c$), in contrast to Mandelbrot's results, which only established that the relationship exists.\\

There are several important avenues for future research.  Having established the asymptotic scaling of the sequence of weights, it is natural to wonder how quickly the sequence approaches its asymptotic behaviour.  Particularly if we are concerned with some graph derived from a real-world application, it may be important to know whether the asymptotic scaling behaviour is relevant to describing the paths one would see in practice.  To answer this question, it should suffice to examine the subdominant ($2^{nd}$ largest) eigenvalues on strongly connected components, and to relate these to the rate of approach on the whole graph.  Of related concern is that the type of scaling (case 1 vs.\ case 2 vs.\ case 3) can depend on the presence or absence of a single link, because that link may affect the existence or cyclicity of a strongly connected component in the graph.  If we imagine that our weighted graph is derived from a Markov transition matrix, then this means there can be a qualitiative difference between a particular transition probability being zero (hence having no corresponding link in the graph) and that transition probability being $10^{-1000}$.  Yet, in practical terms, a particular event with probability $10^{-1000}$ is likely to never happen in this universe, hence we might as well consider the probability to be zero.  In short, it would be useful to have a characterization of the range of ranks for which the path weights are close to their asymptotic behaviour.\\

Another topic of interest is to relate different sets of path labels.  For example, in the introduction we have already mentioned how roads in road networks might naturally be associated either with their length or with the amount of time it takes to travel.  From the theory we have established, we know that the type of scaling depends only on the graph structure, and not the exact weight values.  Thus, both path lengths and path times must follow the same order of scaling.  But what happens if we look at the lengths of path as ordered by increasing time, or vice versa?  As another example, suppose one set of edge weights corresponds to negative log probabilities of a Markov chain and another set corresponds to something else---a distance, time, cost, etc.  Then establishing a relationship between the two is essentially addressing the probability distribution of path distances, times or costs generated by the chain.  More specifically, the asymptotic relationship would concern the shape of the ``tail" of that distribution.\\

A final topic of interest would be to extend the current results to countable-state graphs.  Some real-world graphs are either very large (e.g., the world-wide web), or come without definite \textit{a priori} bounds on their size (e.g., stock prices), or may even be growing over time---even as paths are being generated on them.  Alternatively, some compact mathematical formalisms (e.g., stochastic grammars describing natural language \cite{manning1999foundations} or stochastic chemical kinetic models \cite{wilkinson2006sms}) implicitly define stochastic processes over countable state spaces.  For examples such as these, it is desirable to establish conditions under which the present results, or some modification of them, may hold.

\bibliography{gr_def}
\bibliographystyle{plain}
\end{document}